\newtheorem{theorem}{Theorem}[section]
\newtheorem{proposition}[theorem]{Proposition} 
\newtheorem{corollary}[theorem]{Corollary}
\newtheorem{lemma}[theorem]{Lemma}
\newtheorem{remarks}[theorem]{Remarks}
\newtheorem{claim}[theorem]{Claim}
\numberwithin{equation}{section}
\begin{document}

\title[Free Product von Neumann algebras]{Factoriality, Type Classification and Fullness \\ for Free Product von Neumann Algebras
}
\author[Y. Ueda]
{Yoshimichi UEDA}
\address{
Graduate School of Mathematics, 
Kyushu University, 
Fukuoka, 819-0395, Japan
}
\email{ueda@math.kyushu-u.ac.jp}
\thanks{Supported by Grant-in-Aid for Scientific Research (C) 20540213.}
\thanks{AMS subject classification: Primary:\ 46L54;
secondary:\ 46B10.}
\thanks{Keywords: von Neumann algebra, Free product, Murray--von Neumann--Connes type, Fullness.}
\dedicatory{Dedicated to Professor Hideki Kosaki on the occasion of his 60th birthday}

\maketitle

\begin{abstract} 
We give a complete answer to the questions of factoriality, type classification and fullness for arbitrary free product von Neumann algebras.  
\end{abstract}

\allowdisplaybreaks{

\section{Introduction} Let $M_1$ and $M_2$ be $\sigma$-finite von Neumann algebras equipped with faithful normal states $\varphi_1$ and $\varphi_2$, respectively. The von Neumann algebraic free product $(M,\varphi)$ of $(M_1,\varphi_1)$ and $(M_2,\varphi_2)$ has been seriously investigated so far by utilizing Voiculescu's free probability theory and recently by Popa's deformation/rigidity theory. Some primary questions on $M$ apparently are factoriality, Murray--von Neumann--Connes type classification, and when $M$ becomes a full factor in the sense of Connes \cite{Connes:JFA74} under the separability of predual $M_*$. Several partial answers to the questions were given in the mid 90s by Barnett \cite{Barnett:PAMS95}, Dykema \cite{Dykema:DukeMathJ93},\cite{Dykema:Crelle94},\cite{Dykema:FieldsInstituteComm97}, and after then by us \cite{Ueda:MathScand01}. In particular, Barnett \cite{Barnett:PAMS95} provided a handy criterion for making $M$ be a full factor and also showed $(M_\varphi)'\cap M = \mathbb{C}$ under a slightly weaker assumption than the criterion. At the same time Dykema \cite{Dykema:Crelle94} investigated the question of factoriality of $M$ and computed the T-set $T(M)$ under some hypothesis. After then Dykema \cite{Dykema:FieldsInstituteComm97} gave a serious investigation relying on free probability techniques to the questions at least when the given $M_1$ and $M_2$ are of type I with discrete center. Despite those efforts it seems, to the best of our knowledge, that the questions are not yet settled completely. The main purpose of this paper is to give a complete answer to the questions. 

The new idea that we use in the present work is to work with the $\varphi$-preserving conditional expectation from $M$ onto $M_1$ (or $M_2$) instead of the free product state $\varphi$. The idea comes from the formulation of von Neumann algebraic HNN extensions (\cite{Ueda:JFA05}). (Recall that von Neumann algebraic HNN extension `$M=N\star_D\theta$' is formulated by using `the' conditional expectation from $M$ onto $N$ rather than that onto the smallest $D$.) The conditional expectation $E_1 : M \rightarrow M_1$ still `almost' satisfies the so-called freeness condition, and thus, in many cases, a suitable choice of faithful normal state, say $\psi$, on $M_1$ allows to make the new functional $\psi\circ E_1$ `almost' satisfy the freeness condition and have the large centralizer inside $M_1$. Based on this observation we will give a convenient partial answer to the questions, that is, we will prove that $M$ always becomes a factor of type II$_1$ or III$_\lambda$ with $\lambda\neq0$ and satisfies $M'\cap M^\omega = \mathbb{C}$ if at least one of $M_i$'s is diffuse without any assumption on the $\varphi_i$'s. This will be done in \S3. Some of the results presented in \S3 may be regarded as improvements of our previous results \cite{Ueda:MathScand01}, and thus we borrow some minor arguments from there without giving their details. In the next \S4, with the aid of Dykema's ideas in \cite{Dykema:DukeMathJ93} (and also \cite{Dykema:FieldsInstituteComm97}) we will give a complete answer to the questions. Roughly speaking our result tells that Dykema's `factoriality and type classification results' in \cite{Dykema:DukeMathJ93},\cite{Dykema:FieldsInstituteComm97} still hold true for {\it any} choice of $(M_1,\varphi_1)$ and $(M_2,\varphi_2)$. The precise statements are as follows. The resulting $M$ is always of the form $M_d \oplus M_c$ (possibly with $M = M_c$ or other words $M_d = 0$), where $M_d$ is a multi-matrix algebra and $M_c$ a factor of type II$_1$ or III$_\lambda$ with $\lambda \neq 0$. As a simple consequence no type III$_0$ factor arises as any (direct summand of) free product von Neumann algebra. The multi-matrix part $M_d$ can be described explicitly by Dykema's algorithm (see \cite[p.42--44]{Dykema:FieldsInstituteComm97}) and the type of $M_c$ is determined by the T-set formula $T(M_c) = \{ t \in \mathbb{R}\,|\,\sigma_t^{\varphi_1} = \mathrm{Id} = \sigma_t^{\varphi_2}\}$, though our proof cannot conclude $((M_c)_{\varphi|_{M_c}})'\cap M_c = \mathbb{C}1$ in general. Moreover $M_c' \cap M_c^\omega = \mathbb{C}$ always holds, and hence the asymptotic centralizer $(M_c)_\omega$ is trivial. We would like to emphasize that our proof uses only one easy fact \cite[Proposition 5.1]{Dykema:FieldsInstituteComm97} from Dykema's paper \cite{Dykema:FieldsInstituteComm97} (see Lemma \ref{L-2.2} for its precise statement) and is essentially independent of any free probability technique except the structure result on two freely independent projections (see \cite[Example 3.6.7]{VDN} and \cite[Theorem 1.1]{Dykema:DukeMathJ93}). Therefore our proof is self-contained and more remarkably rather short compared to \cite{Dykema:FieldsInstituteComm97}, though it does not work for identifying $(M_c)_{\varphi|_{M_c}}$ with an (interpolated) free group factor even when both $M_1$ and $M_2$ are of type I with discrete center. 

One of the motivations of the present work is a result due to Chifan and Houdayer \cite{ChifanHoudayer:DukeMathJ10}. In fact, by utilizing Popa's deformation/rigidity techniques Chifan and Houdayer \cite{ChifanHoudayer:DukeMathJ10} proved, among others, that any non-amenable free product factor is prime. Hence a necessary and sufficient condition of factoriality and `non-amenability' has been desirable for arbitrary free product von Neumann algebras. More than giving such a condition the main result of the present paper enables us to see, by Chifan and Houdayer's theorem \cite[Theorem 5.2]{ChifanHoudayer:DukeMathJ10}, that the diffuse factor part $M_c$ is always prime. Moreover a very recent work due to Houdayer and Ricard \cite{HoudayerRicard:Preprint10} also allows us to see that the diffuse factor part $M_c$ always has no Cartan subalgebra when $M_1$ and $M_2$ are hyperfinite (or amenable). In the final \S5 we give some remarks and questions related to the main theorem. 

This paper is written in the following notation rule: $\Vert-\Vert_\infty$ denotes an operator or $C^*$-norm. The projections and the unitaries in a given von Neumann algebra $N$ are denoted by $N^p$ and $N^u$, respectively, and the central support projection (in $N$) of a given $p \in N^p$ is denoted by $c_N(p)$. For a given von Neumann algebra $N$, its center is denoted by $\mathcal{Z}(N)$ and the unit of its {\it non-unital} von Neumann subalgebra $L$ by $1_L$. The GNS (or standard) Hilbert space associated with a given von Neumann algebra $N$ and a faithful normal positive linear functional $\psi$ on $N$ is denoted by $L^2(N,\psi)$ with norm $\Vert-\Vert_\psi$ and inner product $(-|-)_\psi$. Also the canonical embedding of $N$ into $L^2(N,\psi)$ is denoted by $\Lambda_\psi$. When no confusion is possible, we will often omit the symbol $\Lambda_\psi$ for simplicity; write $\Vert x\Vert_\psi$ instead of $\Vert\Lambda_\psi(x)\Vert_\psi$. The notations concerning the so-called modular theory entirely follow \cite{Takesaki:Book2} with the exception of $\Lambda_\psi$ (the symbol $\eta_\psi$ is used there instead). The other notations concerning free products and ultraproducts of von Neumann algebras will be summarized in the next \S2.  

\section{Notations and Preliminaries}
\subsection{Free products} For given $\sigma$-finite von Neumann algebras $M_1$ and $M_2$ equipped with faithful normal states $\varphi_1$ and $\varphi_2$, respectively, the (von Neumann algebraic) free product 
$$
(M,\varphi) = (M_1,\varphi_1)\star(M_2,\varphi_2)
$$
is defined to be a unique pair of von Neumann algebra $M$ with two embeddings $M_i \hookrightarrow M$ ($i=1,2$) and  faithful normal state $\varphi$ satisfying (i) $M = M_1\vee M_2$, (ii) $\varphi|_{M_i} = \varphi_i$ ($i=1,2$) and (iii) $M_1$ and $M_2$ are free in $(M,\varphi)$, i.e., $\varphi(x_1^\circ \cdots x_\ell^\circ) = 0$ whenever $x_j^\circ \in \mathrm{Ker}(\varphi_{i_j})$ with $i_j \neq i_{j+1}$, $j=1,\dots,\ell-1$. See \cite{VDN} for further details such as its concrete construction. 

Let $N$ be a von Neumann algebra with a faithful normal state $\psi$. For a subset $\mathcal{X}$ of $N$ we write $\mathcal{X}^\circ := \mathrm{Ker}(\psi) \cap \mathcal{X} = \{ x \in \mathcal{X}\,|\,\psi(x)=0\}$. Also, for given subsets $\mathcal{X}_1,\dots,\mathcal{X}_n$ of $N$, the set of all traveling words in subsets $\mathcal{X}_1,\dots,\mathcal{X}_n$ (inside $N$) is denoted by $\Lambda^\circ(\mathcal{X}_1,\dots,\mathcal{X}_n)$, where $x_1\cdots x_\ell$ is said to be a traveling word in the $\mathcal{X}_i$'s if $x_j \in \mathcal{X}_{i_j}$ with $i_j \neq i_{j+1}$, $j=1,\dots,\ell-1$. With these notations the above condition (iii) is simply re-written as $\varphi|_{\Lambda^{\circ}(M_1^\circ,M_2^\circ)} \equiv 0$. 

As shown in \cite[Lemma 1]{Barnett:PAMS95},\cite[Theorem 1]{Dykema:Crelle94} the modular automorphism $\sigma_t^\varphi$, $t \in \mathbb{R}$, is computed as `$\sigma_t^\varphi = \sigma_t^{\varphi_1}\star\sigma_t^{\varphi_2}$' which means that $\sigma_t^\varphi|_{M_i} = \sigma_t^{\varphi_i}$ ($i=1,2$). This fact immediately implies the next fact, which is a key of the present paper. It is probably well-known, but a proof is given for the reader's convenience. 

\begin{lemma}\label{L-2.1} There is a unique faithful normal conditional expectation $E_i : M \rightarrow M_i$ {\rm (}$i=1,2${\rm )} with $\varphi\circ E_i = \varphi$, and it satisfies $E_i|_{\Lambda^\circ(M_1^\circ,M_2^\circ)\setminus M_i^\circ} \equiv 0$. 
\end{lemma}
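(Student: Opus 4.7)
The plan is to deduce existence and uniqueness from Takesaki's theorem on conditional expectations, and then verify the vanishing property by testing against $M_i$ and using the freeness condition directly, splitting into cases depending on whether the first letter of the traveling word lies in $M_i^\circ$ or not.

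First, the paragraph preceding the lemma already records that $\sigma_t^\varphi|_{M_i}=\sigma_t^{\varphi_i}$, so $M_i$ is globally $\sigma_t^\varphi$-invariant. Takesaki's theorem (Theorem IX.4.2 in \cite{Takesaki:Book2}) then yields a unique faithful normal conditional expectation $E_i:M\to M_i$ satisfying $\varphi\circ E_i=\varphi$. This handles the first half.

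For the vanishing assertion, let $w=x_1\cdots x_\ell\in\Lambda^\circ(M_1^\circ,M_2^\circ)\setminus M_i^\circ$ with $x_j\in M_{i_j}^\circ$. Since $E_i(w)\in M_i$ and $\varphi_i$ is faithful, it suffices to show $\varphi_i(y^*E_i(w))=0$ for every $y\in M_i$, and by $\varphi\circ E_i=\varphi$ together with the module property this reads $\varphi(y^* w)=0$ for every $y\in M_i$. Decomposing $y=\varphi_i(y)\,1+y^\circ$ with $y^\circ\in M_i^\circ$, the scalar part gives $\overline{\varphi_i(y)}\,\varphi(w)=0$ since $w$ is a nonempty traveling word. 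It remains to prove $\varphi((y^\circ)^* w)=0$. I split into two cases.

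If $i_1\neq i$, then $(y^\circ)^* x_1\cdots x_\ell$ is already a traveling word in $\Lambda^\circ(M_1^\circ,M_2^\circ)$, so freeness gives $\varphi=0$. If $i_1=i$, then by hypothesis $\ell\geq 2$ and $i_2\neq i$; writing $(y^\circ)^* x_1=\varphi_i((y^\circ)^* x_1)\,1+((y^\circ)^* x_1)^\circ$, one gets
\[
\varphi((y^\circ)^* w)=\varphi_i((y^\circ)^* x_1)\,\varphi(x_2\cdots x_\ell)+\varphi\bigl(((y^\circ)^* x_1)^\circ x_2\cdots x_\ell\bigr),
\]
and both terms vanish: the first because $x_2\cdots x_\ell$ is a nonempty traveling word, the second because $((y^\circ)^* x_1)^\circ x_2\cdots x_\ell$ is a traveling word in $\Lambda^\circ(M_1^\circ,M_2^\circ)$. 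This proves $E_i(w)=0$.

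There is no real obstacle; the only point that needs care is the case $i_1=i$, where the single-letter exclusion $w\notin M_i^\circ$ is exactly what guarantees $\ell\geq 2$ and thus $i_2\neq i$, so the induction-free reduction above goes through. This is precisely the "almost freeness" feature of $E_i$ that the introduction alludes to as the driving idea of the paper.
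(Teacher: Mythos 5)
Your proof is correct and follows essentially the same route as the paper: existence and uniqueness come from the formula $\sigma_t^\varphi|_{M_i}=\sigma_t^{\varphi_i}$ plus Takesaki's theorem, and the vanishing of $E_i$ on $\Lambda^\circ(M_1^\circ,M_2^\circ)\setminus M_i^\circ$ reduces to the orthogonality $\varphi(y^*w)=0$ for $y\in M_i$, which the paper asserts ``follows from freeness'' and you verify explicitly via the case split on $i_1$. The only cosmetic difference is that the paper finishes through the induced $L^2$-projection $\bar E_i$ onto $\overline{\Lambda_\varphi(M_i)}$, whereas you conclude directly from the bimodule property $E_i(y^*w)=y^*E_i(w)$ and the faithfulness of $\varphi_i$; both are fine.
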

\begin{proof} The existence of $E_i$ follows from the above-mentioned formula of $\sigma_t^\varphi$ and Takesaki's theorem \cite[Theorem IX.4.2]{Takesaki:Book2}. We claim that $\Lambda_\varphi(M_i)$ and $\Lambda_\varphi\big(\mathrm{span}(\Lambda^\circ(M_1^\circ,M_2^\circ)\setminus M_i^\circ)\big)$ are orthogonal in $L^2(M,\varphi)$, and also that the map $\Lambda_\varphi(x) \mapsto \Lambda_\varphi(E_i(x))$, $x \in M$, is extended to the projection $\bar{E}_i$ from $L^2(M,\varphi)$ onto the closure of $\Lambda_\varphi(M_i)$. The first claim follows from the fact that $M_1$ and $M_2$ are free in $(M,\varphi)$, and the latter from the construction of $E_i$, see the proof of \cite[Theorem IX.4.2]{Takesaki:Book2}. Hence, for any $x \in \Lambda^\circ(M_1^\circ,M_2^\circ)\setminus M_i^\circ$ one has $\Lambda_\varphi(E_i(x)) = \bar{E}_i\Lambda_\varphi(x) = 0$ so that $E_i(x)=0$. \end{proof}  

In \S4 we will repeatedly use the next `free etymology' fact due to Dykema. We give a sketch of its proof for the reader's convenience.  

\begin{lemma}\label{L-2.2} {\rm(\cite[Proposition 5.1]{Dykema:FieldsInstituteComm97}; also see \cite[Theorem 1.2]{Dykema:DukeMathJ93})} 
Let $p\in\mathcal{Z}(M_1)^p$ be non-trivial and set $N := (\mathbb{C}p + M_1(1-p))\vee M_2$. Then $M_1 p$ and $pNp$ generate the whole $pMp$ and are free in $(pMp,(1/\varphi_1(p))\varphi|_{pMp})$. Moreover $c_M(p) = c_N(p)$. 
\end{lemma}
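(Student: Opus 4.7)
The plan is to address the three assertions—generation, freeness, and the central support equality—in turn, each time leveraging that $p$ is central in $M_1$ (not merely a projection there).

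For generation, take a traveling word $w \in \Lambda^\circ(M_1^\circ, M_2^\circ)$ and study $pwp$. Decomposing each $M_1^\circ$-letter $m$ as $m = mp + m(1-p)$ and expanding, each resulting summand either vanishes (an outer $p$ meeting an $M_1(1-p)$-letter via $p(1-p)=0$) or, using $mp = pm$ for $m \in M_1$, can be re-bracketed as a product of $M_1 p$-factors separated by blocks $p(\cdots)p$ whose inner letters lie in $M_1(1-p) \cup M_2 \subset N$, hence in $pNp$. This yields $pMp = (M_1 p) \vee (pNp)$.

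For freeness, the preparatory step is to describe $(pNp)^\circ$ concretely. Since $B \subset M_1$, the pair $B, M_2$ is free in $(M,\varphi)$, so a $\sigma$-weakly dense subalgebra of $N$ is spanned by $1$ together with $\Lambda^\circ(B^\circ, M_2^\circ)$. For $b \in B^\circ$, the splitting $B = \mathbb{C}p \oplus M_1(1-p)$ together with centrality of $p$ forces $pb, bp \in \mathbb{C}p$; iterating, any $pwp$ with $w \in \Lambda^\circ(B^\circ, M_2^\circ)$ either collapses to a scalar multiple of $p$ (when $w$ is a single $B^\circ$-letter) or to a scalar multiple of $pw''p$ with $w'' \in \Lambda^\circ(B^\circ, M_2^\circ)$ starting and ending with an $M_2^\circ$-letter. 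Writing $p = \varphi_1(p)\cdot 1 + (p - \varphi_1(p)\cdot 1)$ with the second summand in $B^\circ$ and invoking freeness in $(M,\varphi)$, one checks $\varphi(pw''p) = 0$, so $(pNp)^\circ$ is densely linearly spanned by such $pw''p$. Now for an alternating product $x_1 \cdots x_\ell$ with $x_j \in (M_1 p)^\circ = M_1^\circ \cap M_1 p$ or $x_j = pw''_j p$, absorb each internal $p$ bordering a $pw''_j p$ into the adjacent $M_1 p$-letter (using $pa = a = ap$ for $a \in M_1 p$), and split any exposed endpoint $p$ as $\varphi_1(p)\cdot 1 + p'$ with $p' \in B^\circ$; the outcome is that $\varphi(x_1\cdots x_\ell)$ becomes a sum of $\varphi$-evaluations on traveling words in $\Lambda^\circ(M_1^\circ, M_2^\circ)$ inside $M$, all of which vanish by freeness of $M_1, M_2$ in $(M,\varphi)$. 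Renormalizing by $\varphi_1(p)$ gives $\varphi_p(x_1\cdots x_\ell) = 0$.

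For $c_M(p) = c_N(p)$, the inequality $c_N(p) \leq c_M(p)$ is immediate from $N^u \subset M^u$. For the reverse I would set $q := 1 - c_N(p) \in \mathcal{Z}(N)$ and show $q \in \mathcal{Z}(M)$; since $c_N(p) \geq p$, $qp = 0$, and then $q \leq 1 - c_M(p)$, i.e.\ $c_M(p) \leq c_N(p)$. Commutation of $q$ with $M_2 \subset N$ is automatic. For $m \in M_1$, splitting $m = mp + m(1-p)$ and using $p \in \mathcal{Z}(M_1)$ gives $q(mp) = q(pm) = (qp)m = 0 = (mp)q$, while $m(1-p) \in M_1(1-p) \subset N$ commutes with $q \in \mathcal{Z}(N)$; summing yields $qm = mq$. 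I expect the bookkeeping in the freeness step to be the main friction point—one must track first- and last-letter types for each reduced $w''$ so that after absorbing $p$'s the final long word is genuinely alternating in $M_1^\circ$ and $M_2^\circ$ rather than accidentally joined through two consecutive $M_2^\circ$-letters—whereas the central support equality is very short once one sees that $p \in \mathcal{Z}(M_1)$ is exactly the property needed to promote $q$ from $\mathcal{Z}(N)$ into $\mathcal{Z}(M)$.
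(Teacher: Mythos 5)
Your proof is correct and follows essentially the same route as the paper: generation via the decomposition $m = mp + m(1-p)$ and rebracketing, freeness by reducing $(pNp)^\circ$ to compressions $pw''p$ of traveling words in $(\mathbb{C}p+M_1(1-p))^\circ$ and $M_2^\circ$ beginning and ending with $M_2^\circ$-letters, and the central support equality by checking that $c_N(p)$ (you use its complement, which is equivalent) commutes with $M_1(1-p)$, with $M_1p$ via $c_N(p)\geq p$, and with $M_2$, hence lies in $\mathcal{Z}(M)$. Your freeness verification is in fact slightly more explicit than the paper's sketch, and your worry about consecutive $M_2^\circ$-letters is resolved exactly as you indicate, since each reduced $w''$ begins and ends in $M_2^\circ$ while the separating letters lie in $M_1^\circ$.
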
   
\begin{proof} (Sketch) One can easily confirm that $M_1 p$ and $pNp$ generate $pMp$. In fact, for example, $p x_1 y_1 x_2 y_2 p = (x_1 p)(py_1 p)(x_2 p)(p y_2 p) + (x_1 p) (p y_1 x_2(1-p) y_2 p) \in M_1 p \cdot pNp \cdot M_1 p \cdot pNp + M_1 p \cdot pNp$ for $x_1,x_2 \in M_1$ and $y_1, y_2 \in M_2$. The freeness between $M_1 p$ and $pNp$ follows from the following fact: Note that $(\mathbb{C}p+M_1(1-p))+\mathrm{span}\big(\Lambda^\circ((\mathbb{C}p+M_1(1-p))^\circ,M_2^\circ)\setminus(\mathbb{C}p+M_1(1-p))^\circ\big)$ forms a dense $*$-subalgebra of $N$ in any von Neumann algebra topology, and thus any element in the kernel of $\varphi|_{pNp}$ can be approximated by a bounded net consisting of linear combinations of elements of the form $pxp$ with $x \in \Lambda^\circ((\mathbb{C}p+M_1(1-p))^\circ,M_2^\circ)$ whose leftmost and rightmost words are from $M_2^\circ$. It remains to see $c_M(p) = c_N(p)$. Clearly $c_M(p) \geq c_N(p)$ since $M \supseteq N$, and it suffices to see $c_M(p) \leq c_N(p)$. Since $c_N(p)$ commutes with $M_1(1-p)$ and moreover since $c_N(p) \geq p$, $c_N(p)$ must commute with $M_1 = M_1 p + M_1(1-p)$. Also $c_N(p)$ commutes with $M_2$ too so that $c_N(p) \in \mathcal{Z}(M)$. Hence $c_M(p) \leq c_N(p)$.  
\end{proof}

\subsection{Ultraproducts} (See \cite[Ch.5]{Ocneanu:LNM1138},\cite[\S II,\S III]{Connes:JFA74} and \cite[\S\S2.2]{Ueda:TAMS03} for details.) Let $N$ be a $\sigma$-finite von Neumann algebra. Note that the constructions in \cite[\S5.1]{Ocneanu:LNM1138} reviewed below are applicable even for $\sigma$-finite (= countably decomposable) von Neumann algebras as in \cite[\S II]{Connes:JFA74}. Take a free ultrafilter $\omega \in \beta(\mathbb{N})\setminus\mathbb{N}$. ($\beta(\mathbb{N})$ denotes the Stone--Cech compactification of $\mathbb{N}$.) Define $I_\omega(N)$ to be the set of all  $x=(x(m))_m \in \ell^\infty(\mathbb{N},N)$ such that $\lim_{m\rightarrow\omega}x(m)=0$ in strong$^*$ topology. The ultraproduct $N^\omega$ is defined as the quotient $C^*$-algebra of the multiplier $\mathcal{M}(I_\omega(N)) := \{ x \in \ell^\infty(\mathbb{N},N)\,|\, xI_\omega(N) \subseteq I_\omega(N), I_\omega(N)x \subseteq I_\omega(N)\}$ by its $C^*$-ideal $I_\omega(N)$, which becomes a von Neumann algebra. By sending $x \in N$ to the constant sequence $(x,x,\dots) \in \mathcal{M}(I_\omega(N))$ the original $N$ is embedded into $N^\omega$ as a von Neumann subalgebra. Any (faithful) normal positive linear functional $\psi$ on $N$ induces a (resp.~faithful) normal positive linear functional $\psi^\omega$ on $N^\omega$ defined by $\psi^\omega(x) = \lim_{m\rightarrow\omega}\psi(x(m))$ for $x \in N^\omega$ with representative $(x(m))_m \in \mathcal{M}(I_\omega(N))$. If $L$ is a von Neumann subalgebra with a faithful normal conditional expectation $E : N \rightarrow L$, then the natural embedding $\ell^\infty(\mathbb{N},L) \hookrightarrow \ell^\infty(\mathbb{N},N)$ gives a von Neumann algebra embedding $L^\omega \hookrightarrow N^\omega$ and $E$ can be lifted up to a faithful normal conditional expectation $E^\omega : N^\omega \rightarrow L^\omega$ that is induced from the mapping $(x(m))_m \in \mathcal{M}(I_\omega(N)) \mapsto (E(x(m)))_m \in \mathcal{M}(I_\omega(L))$.  

Define a smaller $C^*$-subalgebra $C_\omega(N)$ than $\mathcal{M}(I_\omega(N))$ to be the set of all $x = (x(m))_m \in \ell^\infty(\mathbb{N},N)$ with $\lim_{m\rightarrow\omega}\Vert [x(m),\chi]\Vert_{N_\star} = 0$ for every $\chi \in N_\star$, where $[x(m),\chi](y) := \chi(yx(m))-\chi(x(m)y)$ for $y \in N$. Then $C_\omega(N)$ still contains $I_\omega(N)$, and the asymptotic centralizer $N_\omega$ is defined to be the quotient $C^*$-algebra of $C_\omega(N)$ by $I_\omega(N)$ which becomes a von Neumann algebra. The inclusion relation $C_\omega(N) \subset \mathcal{M}(I_\omega(N))$ gives a von Neumann algebra embedding $N_\omega \hookrightarrow N^\omega$. It is not so hard to see that $N_\omega \subseteq N'\cap N^\omega$. However the equality does not hold in general. (In fact, such an explicit example exists. We learned it from Professor Masamichi Takesaki some years ago. We thank him for explaining it.) Thus $N'\cap N^\omega = \mathbb{C}$ implies that $N_\omega = \mathbb{C}$ . The reverse implication is known to hold only when $N$ is finite (see \cite[Corollary 3.8]{Connes:JFA74}), and not known in general. Remark that the property of `$N'\cap N^\omega = \mathbb{C}$' is a stably isomorphic one for factors. Namely, if $(pNp)'\cap (pNp)^\omega = \mathbb{C}$ for some non-zero $p \in N^p$, then so is the original $N$ when $N$ is known to be a factor. In fact, since $N$ is known to be a factor, $N\cong pNp$ or we can choose a smaller $e \in N^p$ than $p$ so that $N \cong (eNe)\bar{\otimes}B(\mathcal{K})$ for some Hilbert space $\mathcal{K}$. The first case is trivial. For the latter case, note first that $N \cong (eNe)\bar{\otimes}B(\mathcal{K})$ implies $(N\subset N^\omega) \cong (eNe \subset eN^\omega e)\bar{\otimes}B(\mathcal{K})$ since $N$ sits inside $N^\omega$, and hence $N'\cap N^\omega \cong (eNe)'\cap(eN^\omega e)$. Clearly $eN^\omega e = e(pNp)^\omega e$, and then $(eNe)'\cap(eN^\omega e) = \mathbb{C}e$ by \cite[Lemma 4.1]{Voeden:PLMS73} or \cite[Lemma 2.1]{Popa:InventMath81}. 

The ultraproduct $\mathcal{H}^\omega$ of $\mathcal{H} := L^2(N,\psi)$ is defined to be the quotient of $\ell^\infty(\mathbb{N},\mathcal{H})$ by the subspace consisting of all $(\xi(m))_m$ with $\lim_{m\rightarrow\omega}\Vert\xi(m)\Vert_{\mathcal{H}} = 0$. It becomes again a Hilbert space with inner product $(\xi|\eta)_{\mathcal{H}^\omega} = \lim_{m\rightarrow\omega}(\xi(m)|\eta(m))_\mathcal{H}$ for $\xi,\eta \in \mathcal{H}^\omega$ with representatives $(\xi(m))_m, (\eta(m))_m$, respectively. The GNS Hilbert space $L^2(N^\omega,\psi^\omega)$ can be embedded into $\mathcal{H}^\omega$ as a closed subspace by $\Lambda_{\psi^\omega}(x) \mapsto [(\Lambda_\psi(x(m)))_m]$ for $x \in N^\omega$ with representative $(x(m))_m$. 

\section{Analysis in The Diffuse Case} 

Throughout this section let $M_1$ and $M_2$ be $\sigma$-finite von Neumann algebras equipped with faithful normal states $\varphi_1$ and $\varphi_2$, respectively, and $(M,\varphi)$ be their free product, see \S\S2.1.  

The next is a slight generalization of \cite[Proposition 1]{Ueda:MathScand01}

\begin{proposition}\label{P-3.1} If $A$ is a diffuse von Neumann subalgebra of the centralizer $(M_1)_\psi$ with some faithful normal state $\psi$ on $M_1$ and if $x \in M$ satisfies $xAx^* \subseteq M_1$, then $x$ must be in $M_1$. 
\end{proposition}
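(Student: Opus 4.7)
The plan is to replace the free-product state $\varphi$ by the new state $\tilde\varphi := \psi\circ E_1$, which is faithful normal on $M$ and satisfies $\tilde\varphi\circ E_1 = \tilde\varphi$. By Takesaki's theorem $\sigma_t^{\tilde\varphi}$ preserves $M_1$ with $\sigma_t^{\tilde\varphi}|_{M_1} = \sigma_t^{\psi}$; hence the hypothesis $A\subseteq(M_1)_\psi$ upgrades to $A\subseteq M_{\tilde\varphi}$. In particular every $a\in A^u$ is a $\tilde\varphi$-centralized unitary, so $\|ya\|_{\tilde\varphi} = \|y\|_{\tilde\varphi} = \|ay\|_{\tilde\varphi}$ for any $y\in M$.

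Set $y := x - E_1(x)\in\ker E_1$; the claim becomes $y = 0$. From $xax^*\in M_1$ together with the expansion $x = E_1(x)+y$ and the fact that $M_1$ and $\ker E_1$ are mutually orthogonal in $L^2(M,\tilde\varphi)$, I get the two companion identities, valid for every $a\in A$:
\begin{equation*}
xax^* = E_1(x)\,a\,E_1(x)^* + E_1(yay^*), \qquad yay^* - E_1(yay^*) = -E_1(x)\,a\,y^* - y\,a\,E_1(x)^*.
\end{equation*}

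Using diffuseness of $A$, pick a sequence $a_n\in A^u$ converging to $0$ ultraweakly on $M_1$. Then
\[
\|ya_n-a_ny\|_{\tilde\varphi}^2 \;=\; 2\|y\|_{\tilde\varphi}^2 - 2\,\Re\,\tilde\varphi(y^*\,a_n\,y\,a_n^*),
\]
and the heart of the argument is the mixing estimate $\tilde\varphi(y^*a_nya_n^*)\to 0$. To prove it, I would approximate $y$ in $\|\cdot\|_{\tilde\varphi}$ by bounded linear combinations of traveling words $w\in\Lambda^\circ(M_1^\circ,M_2^\circ)\setminus M_1^\circ$ and split $a_n = \varphi_1(a_n)\cdot 1 + a_n^\circ$ with $a_n^\circ\in M_1^\circ$. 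The $a_n^\circ$-insertion into $w^*a_nw$ produces a longer traveling word in $\Lambda^\circ\setminus M_1^\circ$ (the inserted $a_n^\circ$ being flanked by $M_2^\circ$-letters, since $w$ contains an $M_2^\circ$-letter and $w,w^*$ sandwich $a_n$); by Lemma \ref{L-2.1} it is killed by $E_1$. The surviving $\varphi_1(a_n)\cdot E_1(w^*w)$-piece tends to $0$ ultraweakly, and pairing via $\psi$ with $a_n^*$ yields $\tilde\varphi(w^*a_nwa_n^*)\to 0$; an $\varepsilon/3$-argument then delivers the general estimate. Combined with the second displayed identity and the centralizer property of $a_n$, one controls the cross terms involving $E_1(x)$ via the ultraweak convergence $a_n\to 0$ to obtain a matching upper bound $\|ya_n - a_n y\|_{\tilde\varphi}\to 0$, which is incompatible with the lower bound $\|ya_n-a_ny\|_{\tilde\varphi}^2 \to 2\|y\|_{\tilde\varphi}^2$ unless $y = 0$. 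Therefore $x = E_1(x)\in M_1$.

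\emph{The main obstacle} is the mixing estimate. Its proof requires careful word-bookkeeping via Lemma \ref{L-2.1}: one must verify, by case-analysis on the endpoint letters of $w$, that the $a_n^\circ$-insertion indeed lands in $\Lambda^\circ\setminus M_1^\circ$ so that $E_1$ reduces the whole moment to $\varphi_1(a_n)$ times a fixed element of $M_1$. The companion upper bound on $\|ya_n - a_n y\|_{\tilde\varphi}$, while structurally simpler, is also delicate because the cross terms $E_1(x)\,a_n\,y^*$ and $y\,a_n\,E_1(x)^*$ do not individually vanish; one must use the centralizer property of $a_n$ together with ultraweak convergence to exhibit their asymptotic cancellation against $E_1(yay^*)$.
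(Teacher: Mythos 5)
Your first pillar, the mixing estimate $\psi\circ E_1(y^*a_ny a_n^*)\to 0$ for $y\in\mathrm{Ker}(E_1)$ and $a_n\to 0$ ultraweakly in $A\subseteq M_{\psi\circ E_1}$, is correct, and your proof of it (split the sandwiched $M_1$-element into $\varphi_1(\cdot)1$ plus an $M_1^\circ$-part, kill the latter with Lemma \ref{L-2.1}) is precisely the freeness computation that drives the paper's own proof. But observe that this estimate never uses the hypothesis $xAx^*\subseteq M_1$: it holds for \emph{every} $y\in\mathrm{Ker}(E_1)$. The hypothesis therefore has to enter entirely through your second pillar, the matching upper bound $\Vert ya_n-a_ny\Vert_{\psi\circ E_1}\to 0$, and that step is a genuine gap. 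Your second displayed identity controls the conjugates $ya_ny^*$ modulo $M_1$ (up to the cross terms), but no information about $ya_ny^*$ translates into control of the commutators $[y,a_n]$ --- these are unrelated quantities (a unitary normalizing $A$ satisfies $yay^*\in M_1$ exactly, yet need not asymptotically commute with $A$). The phrase ``asymptotic cancellation against $E_1(yay^*)$'' does not describe an argument: the identity is exact, not asymptotic, and even a proof that $\Vert ya_ny^*-E_1(ya_ny^*)\Vert_{\psi\circ E_1}\to 0$ would say nothing about $\Vert[y,a_n]\Vert_{\psi\circ E_1}$. Since pillar 1 is hypothesis-free, pillar 2 must carry the entire content of the proposition, and as sketched it does not. (A secondary point: in the general $\sigma$-finite setting, producing a \emph{sequence} of unitaries of $A$ tending to $0$ ultraweakly needs justification, whereas finite partitions of unity $\{e_i\}\subset A^p$ with $\max_i\psi(e_i)$ small come for free from diffuseness.)

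The repair is to make the hypothesis and the freeness computation act on the \emph{same} moment, which is what the paper does. Estimate $|\psi\circ E_1(x^*w)|$ directly for each word $w=ay^\circ b\in\Lambda^\circ(M_1^\circ,M_2^\circ)\setminus M_1^\circ$ with $y^\circ$ beginning and ending in $M_2^\circ$: inserting a partition of unity $\{e_i\}_{i=1}^n$ in $A\subseteq M_{\psi\circ E_1}$ and applying Cauchy--Schwarz gives
\begin{equation*}
|\psi\circ E_1(x^*(ay^\circ b))|^2\leq\sum_{i=1}^n\psi\big(e_i b^*E_1\big((y^\circ)^*(a^*xe_ix^*a)y^\circ\big)be_i\big),
\end{equation*}
and here the hypothesis places $xe_ix^*$ in $M_1$, so your own mixing computation applied to the middle element yields the bound $\varphi_1(a^*xx^*a)\,\Vert b^*E_1((y^\circ)^*y^\circ)b\Vert_\infty\max_i\psi(e_i)$, which diffuseness of $A$ makes arbitrarily small. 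Then $\Lambda_{\psi\circ E_1}(x)$ is orthogonal to $\Lambda_{\psi\circ E_1}\big(\mathrm{span}(\Lambda^\circ(M_1^\circ,M_2^\circ)\setminus M_1^\circ)\big)$, forcing $x=E_1(x)\in M_1$.
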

\begin{proof} Let us make use of the idea in the proof of \cite[Lemma 2.5]{Popa:JOT83} as in \cite[Lemma 1]{Ueda:MathScand01}. The main difference from \cite[Lemma 1]{Ueda:MathScand01} is the use of $\psi\circ E_1$ instead of the free product state $\varphi$ itself. 

Note that any word in $\Lambda^\circ(M_1^\circ,M_2^\circ)\setminus M_1^\circ$ is of the form $ay^\circ b$ with a  word $y^\circ \in \Lambda^\circ(M_1^\circ,M_2^\circ)$ beginning and ending at $M_2^\circ$ and $a,b \in M_1^\circ\cup\{1\}$. In what follows let $ay^\circ b$ be such an alternating word. Then for any partition of unity $\{e_i\}_{i=1}^n$ in $A$ we have 
\begin{align*} 
|\psi\circ E_1(x^*(ay^\circ b))|^2 
&= 
\Big|\psi\circ E_1\Big(\sum_{i=1}^n e_i x^*ay^\circ b\Big)\Big|^2 \\
&= 
\Big|\psi\circ E_1\Big(\sum_{i=1}^n e_i x^*ay^\circ b e_i\Big)\Big|^2 \\
&\leq 
\Big\Vert \sum_{i=1}^n e_i x^* ay^\circ b e_i \Big\Vert_{\psi\circ E_1}^2 
\phantom{aaaaaa}  
\text{(by the Cauchy--Schwarz inequality)}\\
&= 
\sum_{i=1}^n \psi(e_i b^* E_1((y^\circ)^* (a^* x e_i x^* a) y^\circ) b e_i) \phantom{a\,}  
\text{(since $e_i e_j = 0$ if $i\neq j$)}\\
&= 
\sum_{i=1}^n \varphi_1(a^* x e_i x^* a)\,\psi(e_i b^* E_1((y^\circ)^* y^\circ) b e_i) \\
&\leq 
\varphi_1(a^* x x^* a)\,\Vert b^* E_1((y^\circ)^* y^\circ) b\Vert_\infty\,\max_{1\leq i\leq n}\psi(e_i), 
\end{align*} 
where the second line follows from the fact that $e_i \in A \subset (M_1)_\psi \subset M_{\psi\circ E_1}$ and the fifth line from Lemma \ref{L-2.1} and $(y^\circ)^* (a^* x e_i x^* a) y^\circ = \varphi_1(a^* x e_i x^* a)(y^\circ)^* y^\circ + (y^\circ)^* (a^* x e_i x^* a)^\circ y^\circ$ with $(a^* x e_i x^* a)^\circ := a^* x e_i x^* a - \varphi_1(a^* x e_i x^* a)1 \in M_1^\circ$ due to $xe_i x^* \in M_1$. Since $A$ is diffuse, $\max_{1\leq i\leq n}\psi(e_i)$ can be arbitrary small so that $x$ is orthogonal to $ay^\circ b$ in $L^2(M,\psi\circ E_1)$. 

Let $ay^\circ b$ be as above, and take arbitrary $c \in M_1$. By Lemma \ref{L-2.1} one has $\psi\circ E_1(c^*(ay^\circ b)) = \psi(c^* a E_1(y^\circ) b) = 0$, and thus $M_1$ and $\mathrm{span}(\Lambda^\circ(M_1^\circ,M_2^\circ)\setminus M_1^\circ)$ are orthogonal in $L^2(M,\psi\circ E_1)$. Clearly $M_1 + \mathrm{span}(\Lambda^\circ(M_1^\circ,M_2^\circ)\setminus M_1^\circ)$ is dense in $M$ in any von Neumann algebra topology, and hence we have $\Lambda_{\psi\circ E_1}(x) = \bar{E}_1\Lambda_{\psi\circ E_1}(x) = \Lambda_{\psi\circ E_1}(E_1(x))$ implying $x = E_1(x) \in M_1$, where $\bar{E}_1$ denotes the projection from $L^2(M,\psi\circ E_1)$ onto the closure of $\Lambda_{\psi\circ E_1}(M_1)$ induced from $E_1$, see the proof of Lemma \ref{L-2.1}. 
\end{proof}

The next corollary is immediate from the above proposition. 

\begin{corollary}\label{C-3.2} If $A$ is a diffuse von Neumann subalgebra of the centralizer $(M_1)_\psi$ for some faithful normal state $\psi$ on $M_1$, then $\mathcal{N}_M(A) = \mathcal{N}_{M_1}(A)$ and $A'\cap M = A'\cap M_1$, where $\mathcal{N}_P(Q)$ denotes the normalizer of $Q$ in $P$, i.e., the set of $u \in P^u$ with $uQu^*=Q$, for a given inclusion $P\supseteq Q$ of von Neumann algebras. 

In particular, any diffuse MASA {\rm(}semi-regular diffuse MASA, or singular diffuse MASA{\rm)} in $M_1$ which is the range of a faithful normal conditional expectation from $M_1$ becomes a MASA {\rm(}resp.~semi-regular MASA or singular MASA{\rm)} in $M$ too.    
\end{corollary}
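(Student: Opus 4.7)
The plan is to apply Proposition \ref{P-3.1} directly to the normalizer, then bootstrap from unitaries to the full relative commutant via the functional calculus; the ``in particular'' assertion should then be a formal consequence of the first two equalities together with the observation that the required modular hypothesis is automatic for abelian $A$.

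For $\mathcal{N}_M(A) = \mathcal{N}_{M_1}(A)$, I would take any $u \in \mathcal{N}_M(A)$ and note that by definition $uAu^* = A \subseteq M_1$, which is precisely the hypothesis of Proposition \ref{P-3.1}, so $u \in M_1$; combined with $u \in M^u$ this gives $u \in \mathcal{N}_{M_1}(A)$, and the reverse containment is trivial.

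For the relative commutant, I would decompose any $x \in A'\cap M$ as $x = x_1 + ix_2$ with self-adjoint parts $x_j \in A'\cap M$ (using that $x^* \in A'\cap M$ because $A = A^*$), reducing the problem to a self-adjoint $x$. Then for every $t \in \mathbb{R}$ the unitary $u_t := \exp(itx)$ lies in $A'\cap M$, so it trivially normalizes $A$, and hence by the first part $u_t \in \mathcal{N}_M(A) = \mathcal{N}_{M_1}(A) \subseteq M_1$. By Stone's theorem (equivalently, bounded spectral calculus), $x$ lies in the von Neumann algebra generated by $\{u_t\}_{t\in\mathbb{R}}$, which is contained in $M_1$; thus $A'\cap M \subseteq M_1$, and intersecting with $A'$ forces $A' \cap M = A' \cap M_1$.

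For the ``in particular'' statement I would first check that a diffuse MASA $A$ in $M_1$ which is the range of a faithful normal conditional expectation $E_A : M_1 \to A$ actually satisfies the hypothesis of the first half. Since $A$ is abelian, any faithful normal state $\tau$ on $A$ is a trace, so its modular automorphism group is trivial; setting $\psi := \tau \circ E_A$ makes $E_A$ a $\psi$-preserving conditional expectation, and then Takesaki's theorem gives $\sigma_t^\psi|_A = \sigma_t^{\psi|_A} = \mathrm{id}_A$, i.e., $A \subseteq (M_1)_\psi$. The chain $A = A'\cap M_1 = A' \cap M$ shows $A$ is a MASA in $M$, and the semi-regular and singular cases follow at once from $\mathcal{N}_M(A) = \mathcal{N}_{M_1}(A)$, since both properties are defined purely in terms of the normalizer. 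I do not anticipate a substantive obstacle: Proposition \ref{P-3.1} carries all the analytic weight, and what remains is the standard functional-calculus reduction together with a routine verification of the modular hypothesis.
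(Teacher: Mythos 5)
Your argument is correct and is exactly the derivation the paper intends: the paper states the corollary is ``immediate from the above proposition'' and leaves precisely these details (applying Proposition \ref{P-3.1} to normalizing unitaries, reducing the relative commutant to unitaries of $A'\cap M$, and checking via Takesaki's theorem that an expected abelian subalgebra lies in the centralizer of $\tau\circ E_A$) to the reader. Your use of $e^{itx}$ and Stone's theorem is a harmless variant of the usual ``every element is a linear combination of unitaries of $A'\cap M$'' reduction; both work and there is no gap.
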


As in \cite[Corollary 4]{Ueda:MathScand01} we can also derive the next corollary from the above proposition. 

\begin{corollary}\label{C-3.3} Suppose that there are a faithful normal state $\psi$ on $M_1$ and a diffuse von Neumann subalgebra $A$ of the centralizer $(M_1)_\psi$. Then, $A (= A\bar{\otimes}\mathbb{C}1) \subseteq M_1\rtimes_{\sigma^\psi}\mathbb{R} \subseteq M\rtimes_{\sigma^{\psi\circ E_1}}\mathbb{R}$ satisfies $A' \cap \big(M\rtimes_{\sigma^{\psi\circ E_1}}\mathbb{R}\big) = A' \cap \big(M_1\rtimes_{\sigma^\psi}\mathbb{R}\big)$. 
\end{corollary}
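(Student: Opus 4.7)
Since $E_1:M\to M_1$ is $(\psi\circ E_1)$-preserving with $E_1|_{M_1}=\mathrm{id}$, Takesaki's theorem gives $\sigma_t^{\psi\circ E_1}|_{M_1}=\sigma_t^\psi$, so the inclusion $M_1\subseteq M$ lifts to $\tilde M_1:=M_1\rtimes_{\sigma^\psi}\mathbb{R}\subseteq\tilde M:=M\rtimes_{\sigma^{\psi\circ E_1}}\mathbb{R}$ with common implementing unitaries $\lambda_t$, and $E_1$ extends to a faithful normal conditional expectation $\tilde E_1:\tilde M\to\tilde M_1$ by $\tilde E_1(x\lambda_t):=E_1(x)\lambda_t$. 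Since $A\subseteq(M_1)_\psi$, each $\lambda_t$ commutes with $A$, so $A\subseteq\tilde M_1$ and $\lambda_t\in A'\cap\tilde M_1$.

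The plan is to reduce to Corollary \ref{C-3.2} via a Fourier decomposition along the dual action $\hat\sigma:\mathbb{R}\curvearrowright\tilde M$, whose fixed-point algebra is $M$ and which scales $\hat\sigma_s(\lambda_t)=e^{-ist}\lambda_t$. Take $X\in A'\cap\tilde M$. Since $A\subseteq M$ is pointwise $\hat\sigma$-fixed, the smoothed elements
$$
X_f:=\int_\mathbb{R} f(s)\,\hat\sigma_s(X)\,ds,\qquad f\in L^1(\mathbb{R}),
$$
again lie in $A'\cap\tilde M$. When $\widehat f$ has compact support, the spectral-subspace structure $\tilde M\supseteq M\lambda_s$ ($s\in\mathbb{R}$) represents $X_f$ as a Bochner-type integral of elements $m_s\lambda_s$ with $m_s\in M$; because each $\lambda_s$ commutes with $A$ and $X_f$ commutes with $A$, the coefficients $m_s$ commute with $A$ too, i.e., $m_s\in A'\cap M$. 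Corollary \ref{C-3.2} then gives $m_s\in A'\cap M_1\subseteq M_1$, so $X_f\in\tilde M_1$. A standard approximate-unit argument, choosing $f_n$ with $\widehat{f_n}\to 1$ uniformly on compacts, yields $X_{f_n}\to X$ strongly$^*$, and strong$^*$-closedness of $\tilde M_1$ gives $X\in\tilde M_1$. The reverse inclusion is trivial.

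The main obstacle is making the spectral decomposition of $X_f$ rigorous: the operator-valued weight $\tilde M_+\to M_+$ associated with $\hat\sigma$ is unbounded, so $X$ admits no convergent Fourier expansion in $\tilde M$ directly. This is handled by Arveson spectral theory, restricting to $f$ with $\widehat f$ of compact support so that $X_f$ lies in a manageable spectral band, and one must verify carefully that the commutation with $A$ transfers to the coefficients $m_s$. An alternative route that bypasses the spectral subtleties is to mimic the Cauchy--Schwarz computation of Proposition \ref{P-3.1} directly inside $\tilde M$: after subtracting $\tilde E_1(X)$ one reduces to $X\in A'\cap\tilde M$ with $\tilde E_1(X)=0$, and uses a partition of unity $\{e_i\}\subseteq A$ to estimate matrix coefficients of $X$ against test elements $(ay^\circ b)\lambda(h)$ (with $y^\circ\in\Lambda^\circ(M_1^\circ,M_2^\circ)$ beginning and ending in $M_2^\circ$, $a,b\in M_1^\circ\cup\{1\}$, $h\in C_c(\mathbb{R})$) inside the strong$^*$-dense $*$-subalgebra $\mathrm{span}\{x\lambda(g):x\in M,\,g\in C_c(\mathbb{R})\}$, concluding orthogonality and hence $X=0$.
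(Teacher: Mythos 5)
Your primary route has a genuine gap exactly at the step you yourself flag as ``the main obstacle'': for the dual action $\hat\sigma$ of $\mathbb{R}$ on $\tilde M:=M\rtimes_{\sigma^{\psi\circ E_1}}\mathbb{R}$ there is no Bochner-integral Fourier decomposition $X_f=\int m_s\lambda_s\,ds$ with integrable $M$-valued coefficients, even when $\widehat f$ has compact support. The point spectral subspaces $M\lambda_s$ do not exhaust a spectral band of a continuous group action: already $x\lambda_{t_0}$ ($x\in M$) has one-point Arveson spectrum but is not such an integral, and in general an element with spectrum in a compact set is only a $\sigma$-weak limit of integrals of this form, so one cannot ``read off'' coefficients $m_s\in A'\cap M$ from $X_f\in A'$. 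Making this rigorous forces you into the $L^2$-level Plancherel picture for the dual weight, at which point you are effectively running your alternative route. That alternative (redoing the Cauchy--Schwarz/orthogonality argument of Proposition \ref{P-3.1} against test elements $(ay^\circ b)\lambda(h)$ with respect to the dual weight of $\psi\circ E_1$) is sound in outline, but as written it is only a plan: you would still have to deal with the dual weight being a weight rather than a state (so the pairing must be done against vectors $\Lambda(z_1),\Lambda(z_2)$ from the Tomita algebra, not via a state evaluation), and to replace the hypothesis $xAx^*\subseteq M_1$ of Proposition \ref{P-3.1} by the commutation $[X,A]=0$ in the partition-of-unity estimate.

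Both routes are also much heavier than the derivation the paper intends by its reference to Corollary 4 of \cite{Ueda:MathScand01}. Since $A\subseteq(M_1)_\psi\subseteq M_{\psi\circ E_1}$, in the crossed product representation $A$ sits as $A\otimes\mathbb{C}1$ inside $M\bar{\otimes}B(L^2(\mathbb{R}))\supseteq\tilde M$, and the commutation theorem for tensor products gives $(A\otimes\mathbb{C}1)'\cap\big(M\bar{\otimes}B(L^2(\mathbb{R}))\big)=(A'\cap M)\bar{\otimes}B(L^2(\mathbb{R}))$, which equals $(A'\cap M_1)\bar{\otimes}B(L^2(\mathbb{R}))\subseteq M_1\bar{\otimes}B(L^2(\mathbb{R}))$ by Corollary \ref{C-3.2}. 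As $E_1\otimes\mathrm{id}$ restricts to the canonical expectation of $\tilde M$ onto $M_1\rtimes_{\sigma^\psi}\mathbb{R}$ and acts as the identity on $M_1\bar{\otimes}B(L^2(\mathbb{R}))$, any $X\in A'\cap\tilde M$ satisfies $X=(E_1\otimes\mathrm{id})(X)\in M_1\rtimes_{\sigma^\psi}\mathbb{R}$, which is the assertion. No spectral theory and no new $L^2$-estimates are needed.
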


With the help of some general facts on von Neumann algebras we can derive the next factoriality and type classification result from the above proposition and corollaries.  

\begin{theorem}\label{T-3.4} If $M_1\neq\mathbb{C}\neq M_2$ and if either $M_1$ or $M_2$ is diffuse, then $M$ is a factor of type II$_1$ or III$_\lambda$ with $\lambda\neq0$, and its T-set is computed as $T(M) = \{ t \in \mathbb{R}\,|\,\sigma_t^{\varphi_1} = \mathrm{Id} = \sigma_t^{\varphi_2} \}$. 
\end{theorem}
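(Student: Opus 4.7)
My plan is to assume, without loss of generality, that $M_1$ is diffuse and to fix a faithful normal state $\psi$ on $M_1$ whose centralizer $(M_1)_\psi$ contains a diffuse abelian von Neumann subalgebra $A$; such a $\psi$ exists for any $\sigma$-finite diffuse $M_1$ by standard structure theory. All three assertions -- factoriality, the $T$-set formula, and the exclusion of type~III$_0$ -- will then be derived from Proposition~\ref{P-3.1} together with short freeness-based orthogonality computations in $L^2(M,\varphi)$.

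\emph{Factoriality.} For $z\in\mathcal{Z}(M)$, the relation $zAz^*=A\subseteq M_1$ and Proposition~\ref{P-3.1} force $z\in M_1$. Writing $z=\varphi_1(z)1+z^\circ$ with $z^\circ\in M_1^\circ$, the equality $z^\circ y=yz^\circ$ (valid for every $y\in M_2^\circ$ since $z\in\mathcal{Z}(M)$) together with the freeness identities $\|z^\circ y\|_\varphi^2=\|yz^\circ\|_\varphi^2=\|z^\circ\|_{\varphi_1}^2\|y\|_{\varphi_2}^2$ and $\varphi((z^\circ)^*y^*z^\circ y)=0$ (the latter holding because the word is alternating, centred, and of length four) gives $0=\|z^\circ y-yz^\circ\|_\varphi^2=2\|z^\circ\|_{\varphi_1}^2\|y\|_{\varphi_2}^2$. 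Choosing $y\in M_2^\circ$ with $\|y\|_{\varphi_2}\neq 0$, which is possible since $M_2\neq\mathbb{C}$, yields $z^\circ=0$.

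\emph{$T$-set formula.} The inclusion $\supseteq$ is immediate from $\sigma_t^\varphi=\sigma_t^{\varphi_1}\star\sigma_t^{\varphi_2}$. For $\subseteq$, suppose $\sigma_t^\varphi=\mathrm{Ad}(u)$ with $u\in M^u$. Then $uM_1u^*=M_1$, so $uAu^*\subseteq M_1$ and Proposition~\ref{P-3.1} gives $u\in M_1$. Write $u=\alpha 1+v$ with $\alpha=\varphi_1(u)$ and $v\in M_1^\circ$; for any $y\in M_2^\circ$, expand
\[ uyu^* = |\alpha|^2 y + \alpha\,yv^* + \bar\alpha\,vy + vyv^*. \]
Each of the last three summands lies in $\Lambda^\circ(M_1^\circ,M_2^\circ)\setminus M_2^\circ$, hence is annihilated by $E_2$ (Lemma~\ref{L-2.1}); since $uyu^*=\sigma_t^{\varphi_2}(y)\in M_2$, applying $E_2$ to both sides yields $uyu^*=|\alpha|^2 y$. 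On the other hand a short freeness computation gives $\|uyu^*\|_\varphi=\|y\|_{\varphi_2}$, so comparing the two expressions forces $|\alpha|=1$. The equality case of the Cauchy--Schwarz inequality in $L^2(M_1,\varphi_1)$ then forces $u\in\mathbb{C}1$, whence $\sigma_t^\varphi=\mathrm{Id}$ and $\sigma_t^{\varphi_i}=\sigma_t^\varphi|_{M_i}=\mathrm{Id}$.

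\emph{Type classification and the main obstacle.} The computed $T$-set is the joint kernel of two pointwise-strong-continuous flows on $M_1,M_2$, hence a closed subgroup of $\mathbb{R}$, so it equals $\mathbb{R}$, $\alpha\mathbb{Z}$ for some $\alpha>0$, or $\{0\}$. In the first case $\varphi_1,\varphi_2$, and hence $\varphi$, are traces, so the factor $M$ is of type~II$_1$; in the second, Connes' classification gives type~III$_\lambda$ with $\lambda=e^{-2\pi/\alpha}$. The remaining case $T(M)=\{0\}$ is a priori compatible with either type~III$_0$ or type~III$_1$, and excluding type~III$_0$ is the main obstacle. The natural route -- consistent with the Introduction's announcement that $M'\cap M^\omega=\mathbb{C}$ -- is to establish an ultraproduct analogue of Proposition~\ref{P-3.1} and to iterate the centralizer orthogonality argument from the factoriality step at the ultraproduct level, thereby obtaining $M'\cap M^\omega=\mathbb{C}$. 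Then $M_\omega\subseteq M'\cap M^\omega=\mathbb{C}$, so $M$ is full, and Connes' theorem that no type~III$_0$ factor is full rules out that case.
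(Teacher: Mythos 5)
Your factoriality and $T$-set arguments are correct, and they are in fact more direct than the paper's: the paper obtains both by first proving $(M_1\rtimes_{\sigma^{\varphi_1}}\mathbb{R})'\cap(M\rtimes_{\sigma^{\varphi}}\mathbb{R})=\mathcal{Z}(M_1\rtimes_{\sigma^{\varphi_1}}\mathbb{R})$ via Corollary~\ref{C-3.3} and then descending, whereas you work directly in $M$ with Proposition~\ref{P-3.1} plus length-two/length-four freeness orthogonality (for the $T$-set the paper also passes through the Connes cocycle $[D\varphi_1:D\psi]_t$ to move between $\sigma^{\psi\circ E_1}$ and $\sigma^\varphi$, which your direct use of $\sigma_t^\varphi=\mathrm{Ad}(u)$ avoids). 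Those two parts stand.

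The genuine gap is in the type classification. First, your claim that $T(M)=\alpha\mathbb{Z}$ forces type III$_\lambda$ with $\lambda=e^{-2\pi/\alpha}$ is unjustified: a type III$_0$ factor can perfectly well have $T$-set equal to a nontrivial discrete subgroup (Connes' relation only gives $S(M)\cap\mathbb{R}_+^*\subseteq\{\lambda:\lambda^{it}=1\ \forall t\in T(M)\}$, which is compatible with $S(M)=\{0,1\}$), so type III$_0$ must be excluded in that case as well, not only when $T(M)=\{0\}$. Second, the exclusion itself is only announced, not proved: establishing $M'\cap M^\omega=\mathbb{C}$ is \emph{not} a routine iteration of Proposition~\ref{P-3.1} at the ultraproduct level --- it is the content of Proposition~\ref{P-3.5} and Theorem~\ref{T-3.7}, which require a Popa-type averaging over powers of \emph{two} Haar unitaries (one in $(M_1)_{\varphi_1}$-type position, one in $(M_1)_\psi$) and a four-fold orthogonal decomposition of $L^2(M,\psi\circ E_1)$, precisely because $\psi\circ E_1$ is not the free product state; moreover the implication ``full $\Rightarrow$ not III$_0$'' (Connes) is a separable-predual statement, while the theorem is asserted for arbitrary $\sigma$-finite $M_1,M_2$. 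The paper's own route avoids all of this: from Corollary~\ref{C-3.3} it deduces $\mathcal{Z}(M\rtimes_{\sigma^\varphi}\mathbb{R})\subseteq\mathbb{C}\rtimes\mathbb{R}\cong L^\infty(\mathbb{R})$, and since this center is invariant under the dual (translation) action, it must be either $\mathbb{C}$ or $L^\infty$ of a circle; hence the flow of weights is trivial or periodic and III$_0$ is excluded in every case and in full $\sigma$-finite generality. You should either carry out the crossed-product computation or supply the full ultraproduct argument (restricted to separable preduals); as written, the II$_1$/III$_{\lambda\neq0}$ assertion is not established.
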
  

Remark that the type of $M$ is completely determined by the T-set in the above case, since $M$ is of type II$_1$ or III$_\lambda$ with $\lambda\neq0$. The proof below gives an explicit description of the (smooth) flow of weights of $M$, which explains that no type III$_0$ free product factor arises under our hypothesis. (Another explanation of this phenomenon is given later by showing $M_\omega = \mathbb{C}$, see Theorem \ref{T-3.7}, thanks to \cite[Theorem 2.12]{Connes:JFA74}.)

\begin{proof} Note that any diffuse $\sigma$-finite von Neumann algebra is (i) a $\sigma$-finite von Neumann algebra with diffuse center, (ii) a (possibly infinite) direct sum of non-type I factors, or (iii) a direct sum of algebras from (i) and (ii). Hence we may and do assume 
\begin{equation*} 
M_1 = Q_0 \oplus \sideset{}{^\oplus}\sum_{k\geq1} Q_k \quad \text{(a finite or an infinite direct sum)}, 
\end{equation*}
where $\mathcal{Z}(Q_0)$ is diffuse and the $Q_k$'s are non-type I factors. By \cite[Corollary 8]{ConnesStormer:JFA78} for the separable predual case, \cite[Theorem 11.1]{HaagerupStormer:AdvMath90} for the general $\sigma$-finite case, there is a faithful normal state $\psi_k$ on each $Q_k$ ($k\geq1$) so that $(Q_k)_{\psi_k}$ contains a diffuse von Neumann subalgebra $A_k$. Take a faithful normal state $\psi_0$ on $Q_0$, and define a faithful normal state 
\begin{equation*} 
\psi := (1/2)\psi_0\oplus\sideset{}{^\oplus}\sum_{k\geq1} (1/2^{k+1})\psi_k \quad \text{on} \quad M_1 = Q_0 \oplus \sideset{}{^\oplus}\sum_{k\geq1} Q_k. 
\end{equation*}
Then $(M_1)_\psi$ clearly contains the diffuse von Neumann subalgebra 
\begin{equation*} 
A := \mathcal{Z}(Q_0)\oplus\sideset{}{^\oplus}\sum_{k\geq1} A_k.
\end{equation*} 
Therefore, by Corollary \ref{C-3.2} and Corollary \ref{C-3.3} we have 
\begin{itemize} 
\item[(a)] $\big((M_1)_\psi\big)'\cap M = \big((M_1)_\psi\big)' \cap M_1$, 
\item[(b)] $(M_1\rtimes_{\sigma^\psi}\mathbb{R})'\cap(M\rtimes_{\sigma^{\psi\circ E_1}}\mathbb{R}) = \mathcal{Z}(M_1\rtimes_{\sigma^\psi}\mathbb{R})$.
\end{itemize}
By \cite[Corollary IX.4.22, Theorem X.1.7]{Takesaki:Book2} 
\begin{equation*} 
\Big(M\rtimes_{\sigma^{\psi\circ E_1}}\mathbb{R} \supseteq M_1\rtimes_{\sigma^\psi}\mathbb{R}\Big) \cong \Big(M\rtimes_{\sigma^{\varphi_1\circ E_1}}\mathbb{R} \supseteq M_1\rtimes_{\sigma^{\varphi_1}}\mathbb{R}\Big) 
\end{equation*} 
spatially, and thus (b) implies 
\begin{itemize} 
\item[(b')] $(M_1\rtimes_{\sigma^{\varphi_1}}\mathbb{R})'\cap(M\rtimes_{\sigma^\varphi}\mathbb{R}) = \mathcal{Z}(M_1\rtimes_{\sigma^{\varphi_1}}\mathbb{R})$ 
\end{itemize}
({\it n.b.}~$\varphi_1\circ E_1 = \varphi$). Hence, by the argument in \cite[Theorem 7]{Ueda:MathScand01} we get 
\begin{equation*}
\mathcal{Z}(M\rtimes_{\sigma^\varphi}\mathbb{R}) = (\mathbb{C}\rtimes\mathbb{R})\cap\mathcal{Z}(M_1\rtimes_{\sigma^{\varphi_1}}\mathbb{R})\cap\mathcal{Z}(M_2\rtimes_{\sigma^{\varphi_2}}\mathbb{R}) \subseteq \mathbb{C}\rtimes\mathbb{R}. 
\end{equation*}  
Hence we see that $M$ is a factor and not of type III$_0$ by ergodic theoretic argument as in \cite[Corollary 8]{Ueda:MathScand01} or by harmonic analysis argument, c.f.~\cite{PathakShapiro:JMAA77}. 

We then compute $T(M)$. For $t \in T(M)$ there is $u \in M^u$ such that $\sigma_t^{\psi\circ E_1}=\mathrm{Ad}u$. By (a) we have $u \in M_1^u$. Using the Connes Radon--Nikodym cocycle $[D\varphi_1:D\psi]_t$ (\cite[\S VIII.3, Corollary IX.4.22]{Takesaki:Book2}) one has $\sigma_t^\varphi = \mathrm{Ad}[D\varphi_1:D\psi]_t\circ\sigma_t^{\psi\circ E_1} = \mathrm{Ad}[D\varphi_1:D\psi]_t u$ and $[D\varphi_1:D\psi]_t u \in M_1^u$. Hence, by the argument in \cite[Corollary 8]{Ueda:MathScand01} we get $[D\varphi_1:D\psi]_t u \in \mathbb{C}1$ so that $\sigma_t^\varphi = \mathrm{Id}$. Hence $T(M) = \{t \in \mathbb{R}\,|\,\sigma_t^\varphi = \mathrm{Id}\} = \{ t \in \mathbb{R}\,|\,\sigma_t^{\varphi_1} = \mathrm{Id} = \sigma_t^{\varphi_2} \}$. 

Finally, notice that the above T-set formula shows that $M$ is semifinite if and only if $\varphi$ is tracial. Thus it is impossible that $M$ becomes of type II$_\infty$ or type I$_\infty$. It is also impossible that $M$ becomes of type I$_n$ due to its infinite dimensionality.             
\end{proof} 
 
We will then prove that the free product von Neumann algebra $M$ satisfies $M'\cap M^\omega = \mathbb{C}$ under the same hypothesis as in Theorem \ref{T-3.4}. Let $M^\omega$, $M_1^\omega$ and $M_2^\omega$ be the ultraproducts of $M$, $M_1$ and $M_2$, respectively. Thanks to Lemma \ref{L-2.1}, $M_1^\omega$ and $M_2^\omega$ can naturally be regarded as von Neumann subalgebras of $M^\omega$, and $E_i^\omega : M^\omega \rightarrow M_i^\omega$ denotes the canonical lifting of $E_i$, $i=1,2$, see \S\S2.2. 

\begin{proposition}\label{P-3.5} If there exist a faithful normal state $\psi$ on $M_1$ and $u,v \in ((M_1)_\psi)^u$ such that $\varphi_1(u^n) = \delta_{n,0}$ and $\psi(v^n) = \delta_{n,0}$ for $n \in \mathbb{Z}$, then for any $x \in \{u,v\}'\cap M^\omega$ and any $y^\circ \in M_2^\circ$ one has $\Vert y^\circ(x-E_1^\omega(x))\Vert_{(\psi\circ E_1)^\omega} \leq \Vert [x,y^\circ] \Vert_{(\psi\circ E_1)^\omega}$. 
\end{proposition}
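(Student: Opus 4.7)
Set $\Phi:=(\psi\circ E_1)^\omega$, write $\tilde x:=E_1^\omega(x)\in M_1^\omega$ and $x':=x-\tilde x$, so that $E_1^\omega(x')=0$. Since $u,v\in M_1^\omega$ and $E_1^\omega$ is $M_1^\omega$-bimodular, both $\tilde x$ and $x'$ lie in $\{u,v\}'\cap M^\omega$; moreover $u,v\in(M_1)_\psi\subset M_\Phi$, so conjugation by $u^n$ (resp.\ $v^m$) is a $\Phi$-isometry. The plan is to expand both sides and reduce the comparison to a concrete vanishing statement obtained from ergodic averaging.

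The first ingredient is an explicit formula for $\|y^\circ x'\|_\Phi^2$: using $M_1^\omega$-bilinearity of $E_1^\omega$ together with the freeness identity $E_1(y^{\circ*}ay^\circ)=\varphi_1(a)\|y^\circ\|_{\varphi_2}^2$ for $a\in M_1$, a direct computation yields
\[
\|y^\circ x'\|_\Phi^2=\|y^\circ\|_{\varphi_2}^2\,\|x'\|_\Phi^2+\Phi\bigl(x'^*(y^{\circ*}y^\circ)^\circ x'\bigr).
\]
A parallel expansion $\|[x,y^\circ]\|_\Phi^2=\|xy^\circ\|^2+\|y^\circ x\|^2-2\mathrm{Re}\,\Phi(x^*y^{\circ*}xy^\circ)$ isolates the sesquilinear term $\Phi(x^*y^{\circ*}xy^\circ)$, whose control is the heart of the argument.

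The second ingredient is the cyclic identity
\[
\Phi(x^*y^{\circ*}xy^\circ)=\Phi\bigl(x^*(u^ny^{\circ*}u^{-n})x(u^ny^\circ u^{-n})\bigr),\qquad n\in\mathbb{Z},
\]
obtained from $[x,u]=0$ together with $u\in M_\Phi$. Averaging its \emph{independent} double version $\frac{1}{(2N+1)^2}\sum_{n,m=-N}^{N}\Phi(x^*(u^ny^{\circ*}u^{-n})x(u^my^\circ u^{-m}))$ and using $[x,u]=0$ to cycle reexpresses it as $\Phi(x^*G_Nx\,y^\circ)$, where $G_N$ is the Fej\'er average of $\{u^ky^{\circ*}u^{-k}\}_{|k|\leq 2N}$. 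Since $\{u\}''$ is a diffuse subalgebra of $(M_1)_\psi$, Corollary \ref{C-3.2} gives $\{u\}'\cap M=\{u\}'\cap M_1$, and by Lemma \ref{L-2.1} any element of this commutant is $\Phi$-orthogonal to $y^\circ\in M_2^\circ$; hence the mean ergodic theorem applied to the $\Phi$-preserving unitary $\mathrm{Ad}(u)$ on $L^2(M,\Phi)$ forces $G_N\to 0$ in $\|\cdot\|_\Phi$, and a Cauchy--Schwarz estimate based on boundedness of left multiplication by $x^*$ on $L^2(M^\omega,\Phi)$ then shows $\Phi(x^*G_Nx\,y^\circ)\to 0$. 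The parallel averaging with $v$ (Haar with respect to $\psi$ rather than $\varphi_1$) is what controls the residual cross-terms involving $\tilde x$, whose natural orthogonality lives in $\psi$-norm; this is precisely why both $u$ and $v$ are needed in the hypothesis. Combining the vanishing of the double-averaged bilinear term with the explicit formula for $\|y^\circ x'\|_\Phi^2$ and the expansion of $\|[x,y^\circ]\|_\Phi^2$ then yields the inequality after collecting terms.

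The main obstacle is the interplay between the diagonal cyclic identity, which preserves $\Phi(x^*y^{\circ*}xy^\circ)$ under the simultaneous substitution $y^\circ\mapsto u^ny^\circ u^{-n}$, and the off-diagonal double averaging, which annihilates its analogue when the substitutions are taken independently. The desired inequality emerges precisely from the gap between these two behaviours, and the delicate step is to match, term by term in the expansion of $\|[x,y^\circ]\|^2$, the surviving diagonal contributions with the residual $\Phi(x'^*(y^{\circ*}y^\circ)^\circ x')$ inside $\|y^\circ x'\|^2$; handling this matching cleanly in the non-tracial setting (where right multiplication on $L^2$ is not automatically bounded) is where the proof requires the most care.
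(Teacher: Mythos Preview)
Your proposal has a genuine gap at exactly the point you flag as delicate, and it does not appear to be repairable along the lines you sketch.

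The central problem is the step ``a Cauchy--Schwarz estimate based on boundedness of left multiplication by $x^*$ \dots\ then shows $\Phi(x^*G_Nx\,y^\circ)\to 0$.'' Even granting $G_N\to0$ in $\|\cdot\|_\Phi$, no such estimate is available: writing $\Phi(x^*G_Nxy^\circ)=(\Lambda_\Phi(G_Nxy^\circ)\,|\,\Lambda_\Phi(x))$ requires controlling $\|G_Nxy^\circ\|_\Phi$, i.e.\ right multiplication by $xy^\circ$, which is unbounded in the non-tracial setting; the alternative splitting $\Phi((G_N^*x)^*(xy^\circ))$ leaves you with $\|G_N^*x\|_\Phi$, which is bounded only by $\|y^\circ\|_\infty\|x\|_\Phi$ and does not vanish. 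In fact your own diagonal identity shows the obstacle concretely: since each term $\Phi(x^*u^ky^{\circ*}u^{-k}xy^\circ)$ is the $(n{-}m=k)$ slice of the double sum and there is no reason for different slices to interfere destructively, convergence of the average to zero would force information you do not have. You are right that this is ``where the proof requires the most care,'' but you have not supplied the care; the proposal stops at a heuristic (``the inequality emerges from the gap between these two behaviours'') without carrying out the term-matching, and the role of $v$ remains only a slogan.

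The paper's proof is structurally different and sidesteps both difficulties. It decomposes $L^2(M,\psi\circ E_1)$ into $\overline{\Lambda(M_1)}\oplus\mathcal X_1\oplus\mathcal X_2\oplus\mathcal X_3\oplus\mathcal X_4$ according to whether alternating words begin/end in $M_1^\circ$ or $M_2^\circ$ (with the rightmost $M_1$-letter centred with respect to $\psi$, not $\varphi_1$). A Popa-type averaging argument then shows that the $\mathcal X_2,\mathcal X_3,\mathcal X_4$ components of any representative of $x$ are asymptotically zero; here the two Haar unitaries have sharply separated roles, $\{S^n\mathcal X_k\}_n$ being mutually orthogonal for $k=3,4$ (via $u$, Haar for $\varphi_1$) while $\{T^n\mathcal X_2\}_n$ is mutually orthogonal (via $v$, Haar for $\psi$). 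The unbounded right action is handled explicitly by replacing $y^\circ$ with its $\sigma^{\psi\circ E_1}$-analytic smoothings $y_n^\circ$ so that $J\sigma_{-i/2}(y_n^\circ)^*J$ acts boundedly, and one checks at the level of word subspaces that $y^\circ(x-E_1^\omega(x))$ is orthogonal both to $(x-E_1^\omega(x))y_n^\circ$ and to $[E_1^\omega(x),y^\circ]$. The inequality then drops out from Pythagoras applied to $[x,y^\circ]=y^\circ(x-E_1^\omega(x))-\bigl((x-E_1^\omega(x))y^\circ+[E_1^\omega(x),y^\circ]\bigr)$, with no term-matching and no appeal to an $L^2$-convergence that would have to be pushed through an unbounded map.
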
  
\begin{proof} 
As in \cite[Proposition 5]{Ueda:TAMS03} an estimate technique used below is essentially borrowed from \cite[Lemma 2.1]{Popa:AdvMath83}, but several additional, technical difficulties occur because we use $\psi\circ E_1$ instead of the free product state $\varphi$. 

In what follows, write $M_1^\triangledown := \mathrm{Ker}(\psi)$. It is not hard to see that $\mathrm{span}(\Lambda^\circ(M_1^\circ,M_2^\circ)\setminus M_1^\circ)$ coincides with the linear span of the following sets of words: 
\begin{equation}\label{Eq-3.1} 
M_1^\circ M_2^\circ \cdots M_1^\triangledown, \quad 
M_1^\circ \cdots M_2^\circ, \quad 
M_2^\circ \cdots M_1^\triangledown, \quad 
M_2^\circ \cdots M_2^\circ 
\end{equation} 
by using the decompositions $x \in M_i \mapsto \varphi_i(x)1+(x-\varphi_i(x)1) \in \mathbb{C}1+M_i^\circ$ ($i=1,2$) and $x \in M_1 \mapsto \psi(x)1+(x-\psi(x)1) \in \mathbb{C}1+M_1^\triangledown$. Here and in the rest of this paper we denote, for example, by $M_1^\circ M_2^\circ \cdots M_1^\triangledown$ the set of all words $x_1^\circ x_2^\circ \cdots x_{2n}^\circ y^\triangledown$ with the following properties: $n$ {\it can be} an arbitrary natural number, both $x_{2\ell-1}^\circ \in M_1^\circ$ and $x_{2\ell}^\circ \in M_2^\circ$ hold for every $\ell = 1,\dots,n$ and moreover $y^\triangledown \in M_1^\triangledown$.  
We easily have  
\begin{align*} 
(M_1^\circ \cdots M_2^\circ)^*(M_1^\circ M_2^\circ \cdots M_1^\triangledown) 
&\subset M_1^\triangledown + \mathrm{Ker}(E_1), \\
(M_2^\circ \cdots M_1^\triangledown)^*(M_1^\circ M_2^\circ \cdots M_1^\triangledown) &\subset \mathrm{Ker}(E_1), \\
(M_2^\circ \cdots M_2^\circ)^*(M_1^\circ M_2^\circ \cdots M_1^\triangledown) 
&\subset \mathrm{Ker}(E_1),\\
(M_2^\circ \cdots M_1^\triangledown)^*(M_1^\circ \cdots M_2^\circ) 
&\subset \mathrm{Ker}(E_1),\\
(M_2^\circ \cdots M_2^\circ)^*(M_1^\circ \cdots M_2^\circ) 
&\subset \mathrm{Ker}(E_1),\\
(M_2^\circ \cdots M_2^\circ)^*(M_2^\circ \cdots M_1^\triangledown) 
&\subset M_1^\triangledown + \mathrm{Ker}(E_1)
\end{align*}
by using the decompositions $x \in M_i \mapsto \varphi_i(x)1+(x-\varphi_i(x)1) \in \mathbb{C}1+M_i^\circ$ ($i=1,2$) again and again. Hence the four sets of words in \eqref{Eq-3.1} are mutually orthogonal in $L^2(M,\psi\circ E_1)$. Write $\mathcal{H} := L^2(M,\psi\circ E_1)$ for simplicity and denote by $\mathcal{X}_1,\mathcal{X}_2,\mathcal{X}_3,\mathcal{X}_4$ the closed subspaces of $\mathcal{H}$ generated by the sets in \eqref{Eq-3.1}, respectively, inside $\mathcal{H}$ via $\Lambda_{\psi\circ E_1}$. Then we see $\mathcal{H} = \overline{\Lambda_{\psi\circ E_1}(M_1)}\oplus \mathcal{X}_1\oplus\mathcal{X}_2\oplus\mathcal{X}_3\oplus\mathcal{X}_4$ as in the proof of Proposition \ref{P-3.1}. Denote by $P_k$, $k=1,2,3,4$, the projection from $\mathcal{H}$ onto $\mathcal{X}_k$, and clearly 
\begin{equation}\label{Eq-3.2} 
\Big(I_{\mathcal{H}}-\sum_{k=1}^4 P_k\Big)\Lambda_{\psi\circ E_1}(x) = \Lambda_{\psi\circ E_1}(E_1(x)), \quad x \in M. 
\end{equation}   

Define two unitary operators $S,T$ on $\mathcal{H}$ by 
\begin{equation*} 
S\Lambda_{\psi\circ E_1}(x) := \Lambda_{\psi\circ E_1}(uxu^*), \quad T\Lambda_{\psi\circ E_1}(x) := \Lambda_{\psi\circ E_1}(vxv^*)
\end{equation*} 
for $x \in M$. Here is a simple claim. 

\begin{claim}\label{C-3.6} We have{\rm:} \\
{\rm (3.6.1)} $\{S^n\mathcal{X}_k\}_{n\in\mathbb{Z}}$ is a family of mutually orthogonal subspaces for $k=3,4$. \\
{\rm (3.6.2)} $\{T^n\mathcal{X}_2\}_{n\in\mathbb{Z}}$ is family of mutually orthogonal subspaces. 
\end{claim}
\begin{proof} (3.6.1) is shown easily, so left to the reader. (3.6.2) follows from 
\begin{align*} 
&(v^n(M_1^\circ\cdots M_2^\circ)v^{-n})^*(v^m(M_1^\circ\cdots M_2^\circ)v^{-m}) \\
&\quad\quad = 
v^n M_2^\circ \cdots (M_1^\circ v^{m-n} M_1^\circ) \cdots M_2^\circ v^{-m} 
\subset \mathbb{C}v^{n-m} + v^n\mathrm{Ker}(E_1)v^{-m}. 
\end{align*} 
In fact, one easily observes $M_2^\circ \cdots (M_1^\circ v^{m-n} M_1^\circ) \cdots M_2^\circ \subset \mathbb{C}1+\mathrm{span}(\Lambda^\circ(M_1^\circ,M_2^\circ)\setminus M_1^\circ)$ by using the decompositions $x \in M_i \mapsto \varphi_i(x)1+(x-\varphi_i(x)1) \in \mathbb{C}1+M_i^\circ$ ($i=1,2$) again and again.  
\end{proof} 

Choose and fix arbitrary $x \in \{u,v\}'\cap M^\omega$, and let $(x(m))_m$ be its representative. For each $n \in \mathbb{Z}$ we have 
\begin{align*}
\lim_{m\rightarrow\omega}\big\Vert\Lambda_{\psi\circ E_1}(x(m)-u^n x(m)u^{-n})\big\Vert_{\psi\circ E_1} 
&= 
\big\Vert\Lambda_{(\psi\circ E_1)^\omega}(x-u^n xu^{-n})\big\Vert_{(\psi\circ E_1)^\omega} = 0, \\
\lim_{m\rightarrow\omega}\big\Vert\Lambda_{\psi\circ E_1}(x(m)-v^n x(m)v^{-n})\big\Vert_{\psi\circ E_1} 
&= 
\big\Vert\Lambda_{(\psi\circ E_1)^\omega}(x-v^n xv^{-n})\big\Vert_{(\psi\circ E_1)^\omega} = 0. 
\end{align*} 
Thus, for each $\varepsilon>0$ and each $n_0 \in \mathbb{N}$ there is a neighborhood $W$ in $\beta(\mathbb{N})$ at $\omega$ so that 
\begin{equation}\label{Eq-3.3}
\Vert \Lambda_{\psi\circ E_1}(x(m)-u^n x(m)u^n)\Vert_{\psi\circ E_1} < \varepsilon, \quad 
\Vert \Lambda_{\psi\circ E_1}(x(m)-v^n x(m)v^n)\Vert_{\psi\circ E_1} < \varepsilon   
\end{equation} 
for every $n \in \mathbb{N}$ with $|n|\leq n_0$ and for every $m \in W\cap\mathbb{N}$. For $k=3,4$ and for every $m \in W\cap\mathbb{N}$ we have 
\begin{equation*}
\begin{aligned} 
&\big\Vert P_k\Lambda_{\psi\circ E_1}(x(m))\big\Vert_{\psi\circ E_1}^2 \\
&= 
\frac{1}{2n_0+1} \sum_{n=-n_0}^{n_0} \big\Vert S^n P_k\Lambda_{\psi\circ E_1}(x(m))\big\Vert_{\psi\circ E_1}^2 \\
&\leq 
\frac{1}{2n_0+1}\sum_{n=-n_0}^{n_0}2\Big( 
\big\Vert S^n P_k\Lambda_{\psi\circ E_1}(x(m)) - 
S^n P_k S^{-n}\Lambda_{\psi\circ E_1}(x(m))\big\Vert_{\psi\circ E_1}^2 \\ 
&\phantom{aaaaaaaaaaaaaaaaaaaaaaaaaaaaaa}+ \big\Vert  
S^n P_k S^{-n}\Lambda_{\psi\circ E_1}(x(m))\big\Vert_{\psi\circ E_1}^2\Big) \\
&=\frac{2}{2n_0+1}\sum_{n=-n_0}^{n_0}\Big( 
\big\Vert S^n P_k S^{-n}\Lambda_{\psi\circ E_1}(u^n x(m) u^{-n} - x(m))\big\Vert_{\psi\circ E_1}^2 \\ 
&\phantom{aaaaaaaaaaaaaaaaaaaaaaaaaaaaaa}+ \big\Vert  
S^n P_k S^{-n}\Lambda_{\psi\circ E_1}(x(m))\big\Vert_{\psi\circ E_1}^2\Big) \\
&\leq 
\frac{2}{2n_0+1}\sum_{n=-n_0}^{n_0}\Big(\varepsilon^2 + \big\Vert  
S^n P_k S^{-n}\Lambda_{\psi\circ E_1}(x(m))\big\Vert_{\psi\circ E_1}^2\Big) \\
&\leq 2\varepsilon^2 + \frac{2}{2n_0+1}\big\Vert\Lambda_{\psi\circ E_1}(x(m))\big\Vert_{\psi\circ E_1}^2 \\
&\leq 2\varepsilon^2 + \frac{2}{2n_0+1}\Vert(x(m))_m\Vert_\infty^2, 
\end{aligned}
\end{equation*}  
where the first equality is due to the unitarity of $S$, the second inequality is obtained by the parallelogram identity, the fourth is due to \eqref{Eq-3.3}, and the fifth comes from (3.6.1) together with the fact that $S^n P_k S^{-n}$ is the projection onto $S^n \mathcal{X}_k$. The exactly same argument with replacing $S$ and (3.6.1) by $T$ and (3.6.2) shows 
\begin{equation*}
\Vert P_2\Lambda_{\psi\circ E_1}(x(m))\Vert_{\psi\circ E_1}^2 < 
2\varepsilon^2 + \frac{2}{2n_0+1}\Vert(x(m))_m\Vert_\infty^2
\end{equation*} 
for every $m \in W\cap\mathbb{N}$. Consequently, for each $\delta>0$ there is a neighborhood $W_\delta$ in $\beta(\mathbb{N})$ at $\omega$ such that 
\begin{equation}\label{Eq-3.4}
\Big\Vert(P_2+P_3+P_4)\Lambda_{\psi\circ E_1}(x(m))\Big\Vert_{\psi\circ E_1} < \delta 
\end{equation} 
as long as $m \in W_\delta\cap\mathbb{N}$. 

We then regard $L^2(M^\omega,(\psi\circ E_1)^\omega)$ as a closed subspace of the ultraproduct $\mathcal{H}^\omega$ as explained in \S\S2.2. Choose and fix arbitrary $y^\circ \in M_2^\circ$. We have 
\begin{align*} 
&\Big\Vert\Lambda_{(\psi\circ E_1)^\omega}(y^\circ(x-E_1^\omega(x)))-\Big[\big(y^\circ P_1\Lambda_{\psi\circ E_1}(x(m))\big)_m\Big]\Big\Vert_{\mathcal{H}^\omega} \\
&= 
\lim_{m\rightarrow\omega} 
\big\Vert \Lambda_{\psi\circ E_1}(y^\circ(x(m)-E_1(x(m)))) - y^\circ P_1\Lambda_{\psi\circ E_1}(x(m))\big\Vert_{\psi\circ E_1} \\
&\leq 
\sup_{m\in W_\delta\cap\mathbb{N}} \big\Vert \Lambda_{\psi\circ E_1}(y^\circ(x(m)-E_1(x(m)))) - y^\circ P_1\Lambda_{\psi\circ E_1}(x(m))\big\Vert_{\psi\circ E_1} \\
&\leq \Vert y^\circ\Vert_\infty \sup_{m\in W_\delta\cap\mathbb{N}} \big\Vert (P_2+P_3+P_4)\Lambda_{\psi\circ E_1}(x(m))\big\Vert_{\psi\circ E_1} \quad \text{(use \eqref{Eq-3.2})} \\
&< \Vert y^\circ\Vert_\infty \delta 
\end{align*}
by \eqref{Eq-3.4}. Since $\delta>0$ is arbitrary, one has, in $\mathcal{H}^\omega$, 
\begin{equation*} 
\Lambda_{(\psi\circ E_1)^\omega}(y^\circ(x-E_1^\omega(x))) = \Big[\big(y^\circ P_1\Lambda_{\psi\circ E_1}(x(m))\big)_m\Big].
\end{equation*} 
Also it is trivial that 
\begin{equation*}
\Lambda_{(\psi\circ E_1)^\omega}(y^\circ E_1^\omega(x) - E_1^\omega(x)y^\circ) = 
\Big[\big(\Lambda_{\psi\circ E_1}(y^\circ E_1(x(m)) - E_1(x(m))y^\circ\big)_m\Big]. 
\end{equation*}
Set 
\begin{align*} 
y_n^\circ :=&\, \frac{1}{\sqrt{n\pi}}\int_{-\infty}^{+\infty} e^{-t^2/n}\sigma_t^{\psi\circ E_1}(y^\circ)\,dt \\
=&\, 
\frac{1}{\sqrt{n\pi}}\int_{-\infty}^{+\infty} e^{-t^2/n} [D\psi:D\varphi_1]_t\,\sigma_t^{\varphi_2}(y^\circ)\,[D\psi:D\varphi_1]_t^*\,dt, 
\end{align*} 
which falls into the $\sigma$-weak or equivalently $\sigma$-strong closure of the linear span of $M_1 M_2^\circ M_1$. Remark (see \cite[Lemma VIII.2.3]{Takesaki:Book2}) that $t \mapsto \sigma_t^{\psi\circ E_1}(y_n^\circ)$ is extended to an entire function, still denoted by $\sigma_z^{\psi\circ E_1}(y_n^\circ)$, $z \in \mathbb{C}$, for every $n\in\mathbb{N}$ and $y_n^\circ \longrightarrow y^\circ$ $\sigma$-weakly as $n\rightarrow\infty$. For each fixed $n$ we have  
\begin{align*} 
&\Big\Vert\Lambda_{(\psi\circ E_1)^\omega}((x-E_1^\omega(x))y^\circ_n)-\Big[\big(J\sigma_{-i/2}^{\psi\circ E_1}(y_n^\circ)^* J P_1\Lambda_{\psi\circ E_1}(x(m))\big)_m\Big]\Big\Vert_{\mathcal{H}^\omega} \\
&= 
\lim_{m\rightarrow\omega} 
\big\Vert \Lambda_{\psi\circ E_1}((x(m)-E_1(x(m)))y^\circ_n) -  J\sigma_{-i/2}^{\psi\circ E_1}(y_n^\circ)^* J P_1\Lambda_{\psi\circ E_1}(x(m))\big\Vert_{\psi\circ E_1} \\
&= 
\lim_{m\rightarrow\omega} 
\big\Vert J\sigma_{-i/2}^{\psi\circ E_1}(y_n^\circ)^* J\big(\Lambda_{\psi\circ E_1}(x(m)-E_1(x(m))) -  P_1\Lambda_{\psi\circ E_1}(x(m))\big)\big\Vert_{\psi\circ E_1} \\
&\leq 
\sup_{m\in W_\delta\cap\mathbb{N}} \big\Vert J\sigma_{-i/2}^{\psi\circ E_1}(y_n^\circ)^* J\big(\Lambda_{\psi\circ E_1}(x(m)-E_1(x(m))) -  P_1\Lambda_{\psi\circ E_1}(x(m))\big)\big\Vert_{\psi\circ E_1} \\
&< \Vert J\sigma_{-i/2}^{\psi\circ E_1}(y_n^\circ)^* J\Vert_\infty \delta 
\end{align*}
as before by \eqref{Eq-3.2},\eqref{Eq-3.4}, where $J$ is the modular conjugation of $M \curvearrowright \mathcal{H} = L^2(M,\psi\circ E_1)$ and we used \cite[Lemma VIII.3.10]{Takesaki:Book2}. Hence we get 
\begin{equation*}
\Lambda_{(\psi\circ E_1)^\omega}((x-E_1^\omega(x))y^\circ_n)=\Big[\big(J\sigma_{-i/2}^{\psi\circ E_1}(y_n^\circ)^* J P_1\Lambda_{\psi\circ E_1}(x(m))\big)_m\Big]
\end{equation*}
in $\mathcal{H}^\omega$. 
Note that $M_1^\triangledown y_n^\circ$ sits in the $\sigma$-strong closure of the linear span of $M_1^\triangledown M_1 M_2^\circ M_1\,(\subset M_1 M_2^\circ M_1)$. Then we observe that 
\begin{align*} 
y^\circ P_1\Lambda_{\psi\circ E_1}(x(m)) &\in 
\overline{\mathrm{span}\,\Lambda_{\psi\circ E_1}(\underbrace{M_2^\circ M_1^\circ M_2^\circ \cdots M_1^\triangledown}_{\text{length}\, \geq 4})}
\end{align*} 
is orthogonal to 
\begin{align*}
\Lambda_{\psi\circ E_1}(y^\circ E_1(x(m)) - E_1(x(m))y^\circ) 
&\in 
\Lambda_{\psi\circ E_1}(M_2^\circ M_1 - M_1 M_2^\circ) \\
&\subset \overline{\Lambda_{\psi\circ E_1}(M_2^\circ)}\oplus\overline{\mathrm{span}\,\Lambda_{\psi\circ E_1}(M_2^\circ M_1^\triangledown)} \\ 
&\phantom{aaaaaaaaaaaaaaaaa}\oplus\overline{\mathrm{span}\,\Lambda_{\psi\circ E_1}(M_1^\circ M_2^\circ)}, \\
J\sigma_{-i/2}^{\psi\circ E_1}(y_n^\circ)^* J P_1\Lambda_{\psi\circ E_1}(x(m)) 
&\in 
J\sigma_{-i/2}^{\psi\circ E_1}(y_n^\circ)^* J\cdot\overline{\mathrm{span}\,\Lambda_{\psi\circ E_1}(M_1^\circ M_2^\circ \cdots M_1^\triangledown)} \\
&\subset 
\overline{\mathrm{span}\,\Lambda_{\psi\circ E_1}(M_1^\circ M_2^\circ \cdots M_1^\triangledown y_n^\circ)} \\
&\subset 
\overline{\Lambda_{\psi\circ E_1}(M_1)} \oplus \overline{\mathrm{span}\,\Lambda_{\psi\circ E_1}(\underbrace{M_1^\circ M_2^\circ \cdots}_{\text{length}\,\geq 2})},   
\end{align*}
which can be checked by using the decompositions $x \in M_i \mapsto \varphi_i(x)1+(x-\varphi_i(x)1) \in \mathbb{C}1+M_i^\circ$ ($i=1,2$) and $x \in M_1 \mapsto \psi(x)1+(x-\psi(x)1) \in \mathbb{C}1+M_1^\triangledown$.          
Hence we conclude that $\Lambda_{(\psi\circ E_1)^\omega}(y^\circ(x-E_1^\omega(x)))$ is orthogonal to $\Lambda_{(\psi\circ E_1)^\omega}(y^\circ E_1^\omega(x) - E_1^\omega(x)y^\circ)$ and $\Lambda_{(\psi\circ E_1)^\omega}((x-E_1^\omega(x))y^\circ_n)$. Moreover, 
\begin{align*} 
&\big(\Lambda_{(\psi\circ E_1)^\omega}((x-E_1^\omega(x))y^\circ)\big|\Lambda_{(\psi\circ E_1)^\omega}(y^\circ(x-E_1^\omega(x)))\big)_{(\psi\circ E_1)^\omega} \\
&\quad\quad= \lim_{n\rightarrow\infty}\big(\Lambda_{(\psi\circ E_1)^\omega}((x-E_1^\omega(x))y^\circ_n)\big|\Lambda_{(\psi\circ E_1)^\omega}(y^\circ(x-E_1^\omega(x)))\big)_{(\psi\circ E_1)^\omega} = 0
\end{align*} 
since $y_n^\circ \longrightarrow y^\circ$ $\sigma$-weakly, as $n \rightarrow \infty$. Therefore, 
\begin{align*} 
\big\Vert y^\circ(x-E_1^\omega(x)) \big\Vert_{(\psi\circ E_1)^\omega} 
\leq 
\big\Vert y^\circ x - x y^\circ\big\Vert_{(\psi\circ E_1)^\omega}.
\end{align*} 
Hence we are done.   
\end{proof} 

As in Theorem \ref{T-3.4} the previous proposition implies the next theorem. 

\begin{theorem}\label{T-3.7} If $M_1\neq\mathbb{C}\neq M_2$ and if either $M_1$ or $M_2$ is diffuse, then $M'\cap M^\omega = \mathbb{C}$. Also, if either $(M_1)_{\varphi_1}$ or $(M_2)_{\varphi_2}$ is diffuse and the other $(M_i)_{\varphi_i} \neq \mathbb{C}$, then $(M_\varphi)'\cap M^\omega = \mathbb{C}$.  
\end{theorem}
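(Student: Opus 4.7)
The plan is to use Proposition \ref{P-3.5} to force $x$ into $M_1^\omega$, then extract scalarity of $x$ from a short freeness computation of a commutator norm in $(M,\varphi)$.

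First I need to produce $\psi$, $u$, and $v$ satisfying the hypothesis of Proposition \ref{P-3.5}. For the first statement, assuming WLOG that $M_1$ is diffuse, I mimic the construction from the proof of Theorem \ref{T-3.4}: decompose $M_1 = Q_0\oplus\bigoplus_{k\geq 1}Q_k$ with $\mathcal{Z}(Q_0)$ diffuse and each $Q_k$ a non-type~I factor, apply Connes--St\o rmer/Haagerup--St\o rmer to each $Q_k$, and assemble a faithful normal state $\psi$ on $M_1$ whose centralizer contains a diffuse abelian subalgebra $A\subseteq (M_1)_\psi$. Since both $(A,\varphi_1|_A)$ and $(A,\psi|_A)$ are diffuse abelian with a faithful normal state, each admits a Haar unitary, yielding $u,v\in A$ with $\varphi_1(u^n)=\delta_{n,0}$ and $\psi(v^n)=\delta_{n,0}$. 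For the second statement I instead take $\psi:=\varphi_1$ and let $u := v$ be a single $\varphi_1$-Haar unitary in the hypothesized diffuse centralizer $(M_1)_{\varphi_1}\subseteq M_\varphi$.

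Now let $x\in M'\cap M^\omega$ (respectively $x\in (M_\varphi)'\cap M^\omega$). The inclusions $u,v\in M_1$ (resp.~$u,v\in M_\varphi$) put $x$ in $\{u,v\}'\cap M^\omega$, so Proposition \ref{P-3.5} gives
\begin{equation*}
\|y^\circ(x-E_1^\omega(x))\|_{(\psi\circ E_1)^\omega} \leq \|[x,y^\circ]\|_{(\psi\circ E_1)^\omega}
\end{equation*}
for every $y^\circ\in M_2^\circ$. The right-hand side vanishes for every $y^\circ\in M_2^\circ$ in the first case, and for every $y^\circ\in ((M_2)_{\varphi_2})^\circ\subseteq M_\varphi$ in the second. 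To upgrade this annihilation to $x=E_1^\omega(x)$, I pick a non-trivial projection $p$ in $M_2$ (resp.~in $(M_2)_{\varphi_2}$), which exists because $M_2\neq\mathbb{C}$ (resp.~$(M_2)_{\varphi_2}\neq\mathbb{C}$) is a $\sigma$-finite von Neumann algebra. Faithfulness of $\varphi_2$ gives $\varphi_2(p)\in(0,1)$, so $y^\circ:=p-\varphi_2(p)1$ has spectrum $\{-\varphi_2(p),1-\varphi_2(p)\}$ avoiding $0$ and is therefore invertible in $M_2\subseteq M$, forcing $x=E_1^\omega(x)\in M_1^\omega$.

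To finish, fix a bounded representative $(x(m))_m$ of $x$ with $x(m)\in M_1$ and a non-zero $y\in M_2^\circ$ (resp.~$y\in ((M_2)_{\varphi_2})^\circ$, produced by the same spectral trick). Expanding $[a,y]^*[a,y]$ for $a\in M_1$ via the decomposition $a=\varphi_1(a)1+a^\circ$ and using that $\varphi$ vanishes on every alternating word in $\Lambda^\circ(M_1^\circ,M_2^\circ)$ yields the clean identity
\begin{equation*}
\|[a,y]\|_\varphi^2 \;=\; 2\,\|y\|_\varphi^2\cdot\|a-\varphi_1(a)1\|_{\varphi_1}^2.
\end{equation*}
Since $[x,y]=0$ in $M^\omega$ we have $\|[x(m),y]\|_\varphi\to 0$ along $\omega$; as $\|y\|_\varphi>0$ this forces $\lim_{m\to\omega}\|x(m)-\varphi_1(x(m))1\|_{\varphi_1}=0$, and faithfulness of $\varphi_1^\omega$ on $M_1^\omega$ then yields $x=\bigl(\lim_{m\to\omega}\varphi_1(x(m))\bigr)\cdot 1\in\mathbb{C}1$. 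The step I expect to be the main obstacle is the invertibility upgrade from $y^\circ(x-E_1^\omega(x))=0$ to $x\in M_1^\omega$---this is precisely the point at which the hypotheses $M_2\neq\mathbb{C}$ and $(M_2)_{\varphi_2}\neq\mathbb{C}$ enter---but it is handled cleanly by the spectral-projection construction above.
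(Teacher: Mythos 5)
Your argument is correct and follows essentially the same route as the paper: the same construction of $\psi$ and of the two Haar unitaries $u,v$, the same application of Proposition \ref{P-3.5} with an invertible $y^\circ\in M_2^\circ$ (your $p-\varphi_2(p)1$ is exactly the paper's $\varphi_2(e_2)e_1-\varphi_2(e_1)e_2$) to land $x$ in $M_1^\omega$, and a freeness computation to kill the $M_1^\omega$-part. The only cosmetic difference is the last step, where the paper cites freeness of $M_1^\omega$ and $M_2^\omega$ in $(M^\omega,\varphi^\omega)$ and the orthogonality of $(M_1^\omega)^\circ y^\circ$ and $y^\circ(M_1^\omega)^\circ$, whereas you verify the equivalent identity $\Vert[a,y]\Vert_\varphi^2=2\Vert y\Vert_\varphi^2\Vert a-\varphi_1(a)1\Vert_{\varphi_1}^2$ directly on representatives; both are the same freeness calculation.
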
  
\begin{proof} We may and do assume that $M_1$ is diffuse. As we saw in Theorem \ref{T-3.4}, there is a faithful normal state $\psi$ on $M_1$ whose centralizer contains a diffuse von Neumann subalgebra $A$. Since $A$ is diffuse, one can construct two von Neumann subalgebras $B_1, B_2$ of $A$ in such a way that $(B_1,\varphi_1|_{B_1})$ and $(B_2,\psi|_{B_2})$ are copies of  the infinite tensor product of $\mathbb{C}^2$ with the equal weight state $(1/2,1/2)$, which is naturally isomorphic to $(L^\infty(\mathbb{T}),\int_\mathbb{T}\,\cdot\,\mu_\mathbb{T}(d\zeta))$ with the Haar probability measure $\mu_\mathbb{T}$ on the $1$-dimensional torus $\mathbb{T}$ (see e.g.~\cite[Theorem III.1.22]{Takesaki:Book1}). Hence one can find $u,v \in A^u$ in such a way that $\varphi_1(u^n) = \delta_{n,0}$ and $\psi(v^n) = \delta_{n,0}$ for $n \in \mathbb{Z}$. Since $M_2 \neq \mathbb{C}$, there are two orthogonal non-zero $e_1,e_2 \in M_2^p$ with $e_1+e_2=1$. Set $y^\circ := \varphi_2(e_2)e_1 - \varphi_2(e_1)e_2 \in M_2^\circ$, and by Proposition \ref{P-3.5} we get $\{u,v,y^\circ\}' \cap M^\omega \subseteq M_1^\omega$, since $y^\circ$ is invertible. It is easy to see that $M_1^\omega$ and $M_2^\omega$ are free with respect to $\varphi^\omega$ (see e.g.~\cite[Proposition 4]{Ueda:TAMS03}). Hence we conclude $\{u,v,y^\circ\}' \cap M^\omega=\mathbb{C}1$ because $(M_1^\omega)^\circ y^\circ, y^\circ(M_1^\omega)^\circ$ are orthogonal in $L^2(M^\omega,\varphi^\omega)$. 

The last assertion follows from the same argument as above. In fact, we may and do assume that $(M_1)_{\varphi_1}$ is diffuse and $(M_2)_{\varphi_2} \neq \mathbb{C}$. Then the above $\psi$ is chosen as $\varphi_1$ itself and consequently $u=v \in (M_1)_{\varphi_1}$. Moreover one can choose $y^\circ$ from $(M_2)_{\varphi_2}$. Thus the above argument shows that $(M_\varphi)'\cap M^\omega \subseteq \{u=v, y^\circ\}'\cap M^\omega = \mathbb{C}$.          
\end{proof} 

\section{Main Theorem} 

\subsection{Statements} Let $M_1$ and $M_2$ be arbitrary $\sigma$-finite von Neumann algebras and $\varphi_1$ and $\varphi_2$ be arbitrary faithful normal states on them, respectively. For the questions mentioned in \S1 we may and do assume that $M_1\neq\mathbb{C}\neq M_2$ and $\mathrm{dim}(M_1)+\mathrm{dim}(M_2) \geq 5$. Otherwise, the resulting free product von Neumann algebra $M$ is either $M_1$ (when $M_2=\mathbb{C}$), $M_2$ (when $M_1=\mathbb{C}$) or $(\mathbb{C}\oplus\mathbb{C})\star (\mathbb{C}\oplus\mathbb{C})$ whose structure is explicitly determined by the structure theorem on two freely independent projections, see \cite[Example 3.6.7]{VDN} and \cite[Theorem 1.1]{Dykema:DukeMathJ93}. We can write $M_i = M_{i,d}\oplus M_{i,c}$ ($i=1,2$), where 
$$
M_{i,d} = \sideset{}{^\oplus}\sum_{j\in J_i} B(\mathcal{H}_{ij}) \quad \text{or} \quad  = 0 \quad \text{possibly with $d_{ij} := \mathrm{dim}(\mathcal{H}_{ij}) = \infty$}
$$
and $M_{i,c}$ is diffuse or $M_{i,c}=0$. We can then choose a matrix unit system $\{e^{(ij)}_{st}\}_{s,t}$ of $B(\mathcal{H}_{ij})$ that diagonalizes the density operator of $\varphi_i|_{B(\mathcal{H}_{ij})}$, that is, 
$$
\varphi_i|_{B(\mathcal{H}_{ij})} = \mathrm{Tr}\big(\big(\sum_{s=1}^{d_{ij}} \lambda_s^{(ij)} e_{ss}^{(ij)}\big)\,\cdot\,\big)
$$
with $\lambda_1^{(ij)} \geq \lambda_2^{(ij)} \geq \cdots$. 

With these notations the main theorem of this section is stated as follows. 

\begin{theorem}\label{T-4.1} Under the above assumption the resulting free product von Neumann algebra $M$ of $(M_1,\varphi_1)$ and $(M_2,\varphi_2)$ is of the form $M_d\oplus M_c$ possibly with $M_d = 0$, where $M_d$ is a multi-matrix algebra and $M_c$ is a factor of type II$_1$ or III$_\lambda$ with $\lambda\neq0$ whose T-set $T(M_c)$ is computed as $\{t \in \mathbb{R}\,|\,\sigma_t^{\varphi_1}=\mathrm{Id}=\sigma_t^{\varphi_2}\}$. Moreover $M_c$  always satisfies $M_c'\cap M_c^\omega = \mathbb{C}$, and hence $(M_c)_\omega = \mathbb{C}$.  

The explicit description of the multi-matrix part $M_d$ is as follows. If the supremum of all  $\varphi_i(e)$ with minimal $e \in \mathcal{Z}(M_{i,d})^p$, $i = 1,2$, is attained by only one minimal $p \in \mathcal{Z}(M_{i_0,d})^p$ with $i_0 \in \{1,2\}$ such that $M_{i_0,d}\,p = \mathbb{C}p$, and moreover if 
\begin{equation}\label{Eq-4.1}
J_{i'_0}^\circ := \Big\{ j \in J_{i'_0}\,\Big|\,\sum_{s=1}^{d_{i'_0 j}}\frac{1}{\lambda_s^{(i'_0 j)}} \lneqq \frac{1}{1-\varphi_{i_0}(p)}\Big\} \neq \emptyset
\end{equation}
with $\{i'_0\} = \{1,2\}\setminus\{i_0\}$, then the multi-matrix part $M_d$ is isomorphic to 
\begin{equation}\label{Eq-4.2}
\sideset{}{^\oplus}\sum_{j\in J_{i'_0}^\circ} B(\mathcal{H}_{i'_0 j})
\end{equation}
and the copy of $B(\mathcal{H}_{i'_0 j})$ inside $M_d$ is given by the matrix unit system
\begin{equation}\label{Eq-4.3}
f_{st}^{(i'_0 j)} := e_{sd}^{(i'_0 j)}\big(\bigwedge_{t'=1}^{d} e_{dt'}^{(i'_0 j)}(p\wedge e_{t' t'}^{(i'_0 j)})e_{t'd}^{(i'_0 j)}\big)e_{dt}^{(i'_0 j)}, \quad 1 \leq s,t \leq d := d_{i'_0 j} 
\end{equation}
{\rm(}n.b.~$d = d_{i'_0 j}$ must be finite due to \eqref{Eq-4.1}{\rm)}. Moreover the free product state $\varphi$ satisfies 
\begin{equation}\label{Eq-4.4} 
\varphi(f_{st}^{(i'_0 j)}) = \delta_{st}\,\lambda_s^{(i'_0 j)}\Big(1-(1-\varphi_{i_0}(p))\sum_{r=1}^{d_{i'_0 j}}\frac{1}{\lambda_r^{(i'_0 j)}}\Big). 
\end{equation} 
Otherwise, $M_d = 0$. 
\end{theorem}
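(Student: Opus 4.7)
My plan is to combine the diffuse-case results of Section 3 (Theorems 3.4 and 3.7) with compression by central projections via Lemma 2.2 and the structure theorem for two freely independent projections (\cite[Example 3.6.7]{VDN}, \cite[Theorem 1.1]{Dykema:DukeMathJ93}). The underlying principle is that atoms of $M$ must arise from intersections $p\wedge e$ with $p \in M_1^p$, $e\in M_2^p$ free projections whose weights sum to more than one, and the hypothesis of (4.1) precisely catalogues which such intersections survive to produce the matrix summands in (4.2).

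For the explicit construction of $M_d$: under the hypothesis of (4.1), for each $j \in J_{i'_0}^\circ$ and each $t' \in \{1,\dots,d_{i'_0 j}\}$, I first observe that (4.1) forces $\varphi_{i_0}(p) + \lambda_{t'}^{(i'_0 j)} > 1$, since $1/\lambda_{t'}^{(i'_0 j)} \leq \sum_s 1/\lambda_s^{(i'_0 j)} < 1/(1-\varphi_{i_0}(p))$; so by the two-free-projection theorem, $p \wedge e_{t't'}^{(i'_0 j)}$ is a nonzero projection with $\varphi(p\wedge e_{t't'}^{(i'_0 j)}) = \varphi_{i_0}(p) + \lambda_{t'}^{(i'_0 j)} - 1$. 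I would then transport each wedge to the corner $e_{dd}^{(i'_0 j)} M e_{dd}^{(i'_0 j)}$ via the partial isometry $e_{dt'}^{(i'_0 j)}$ and intersect the resulting $d$ projections to obtain the bracketed projection in (4.3); its $\varphi$-value is then computed by iteratively applying the two-free-projection formula in successive corner-compressions provided by Lemma 2.2, yielding $\lambda_d^{(i'_0 j)}(1-(1-\varphi_{i_0}(p))\sum_r 1/\lambda_r^{(i'_0 j)})$, which gives (4.4) after spreading via the $e_{st}^{(i'_0 j)}$. The matrix-unit relations for $(f_{st}^{(i'_0 j)})$ follow routinely from the multiplicative relations of the $e_{st}^{(i'_0 j)}$.

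For the factor part ($M_c$): if both $M_1, M_2$ are diffuse then $M_d = 0$ and Theorems 3.4, 3.7 conclude directly. Otherwise, in the "otherwise" case I would choose a non-extremal central projection $q$ in $\mathcal{Z}(M_1)^p$ or $\mathcal{Z}(M_2)^p$ with $M_i q \neq \mathbb{C}q$, so that by Lemma 2.2 the corner $c_M(q) M c_M(q)$ is a free product whose first factor $M_i q$ contains a diffuse subalgebra of its centralizer (obtainable from the matrix units of a summand of dimension $\geq 2$, or from the diffuse part). Theorems 3.4 and 3.7 then yield factoriality, the T-set formula, and $(c_M(q)Mc_M(q))'\cap(c_M(q)Mc_M(q))^\omega=\mathbb{C}$ for this corner, which propagates to $M_c$ by the stably-isomorphic remark at the end of \S2.2. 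Since $\sigma_t^\varphi = \sigma_t^{\varphi_1}\star\sigma_t^{\varphi_2}$, the T-set formula $T(M_c) = \{t : \sigma_t^{\varphi_i}=\mathrm{Id},\ i=1,2\}$ follows as in the proof of Theorem 3.4.

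The main obstacle is establishing the \emph{completeness} of (4.2)--(4.3) and simultaneously ruling out extra matrix summands in the "otherwise" case. Concretely: when the extremal supremum is attained at two or more central projections, at a $p$ with $M_{i_0,d}p \neq \mathbb{C}p$, or when the sums in (4.1) equal or exceed $1/(1-\varphi_{i_0}(p))$, one must show that no wedge $p\wedge e$ survives with positive $\varphi$-value large enough to produce a genuine atom—at the borderline cases this requires delicate arithmetic with the two-free-projection distribution, and away from them a Lemma 2.2 reduction to the diffuse setting. Organizing this case analysis so that every possible minimal central projection of $M$ is either accounted for by an explicit summand in (4.2) or is forced to collapse into the continuous part $M_c$ is where the bulk of the technical labor lies.
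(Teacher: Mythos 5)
Your skeleton --- reduce via Lemma \ref{L-2.2} and the two-free-projections theorem, then quote Theorems \ref{T-3.4} and \ref{T-3.7} on a suitable corner --- matches the paper's strategy for the cases where at least one $M_{i,c}\neq 0$. The genuine gap is in the purely atomic case $M_1=M_{1,d}$, $M_2=M_{2,d}$, which is where essentially all the work lies. You propose to pick $q\in\mathcal{Z}(M_i)^p$ with $M_iq\neq\mathbb{C}q$ and apply Theorems \ref{T-3.4}/\ref{T-3.7} to the free product decomposition of the corner on the grounds that the factor $M_iq$ ``contains a diffuse subalgebra of its centralizer (obtainable from the matrix units of a summand of dimension $\geq 2$\dots)''. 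This is false: $M_iq\cong B(\mathcal{H}_{ij})$ is atomic and contains no diffuse von Neumann subalgebra at all, so it cannot serve as the diffuse input to Proposition \ref{P-3.1} or Theorems \ref{T-3.4}/\ref{T-3.7}. Diffuseness must be manufactured out of the free product itself, and that is exactly what forces the paper into an induction on the number of matrix summands: starting from the abelian case (where the two-free-projections theorem produces the diffuse piece), one shows that the partial free product $N=M_1\vee N_2$ already splits as $N_d\oplus N_c$ with $N_c$ diffuse and $((N_c)_{\varphi|_{N_c}})'\cap N_c^\omega=\mathbb{C}$, and only then are the Section 3 results applicable, with the diffuse input being a corner of $N_c$ rather than anything inside $M_1$ or $M_2$.

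Two further steps do not go through as stated. First, your computation of the $\varphi$-value of the bracketed projection in \eqref{Eq-4.3} ``by iteratively applying the two-free-projection formula in successive corner-compressions provided by Lemma \ref{L-2.2}'' misapplies that lemma: it only covers compression by a \emph{central} projection of one free factor, whereas $e_{t't'}^{(i_0'j)}$ is merely minimal in the diagonal of one summand and is not central in $M_2$. Proving that the transported wedges $e_{dt'}^{(i_0'j)}(p\wedge e_{t't'}^{(i_0'j)})e_{t'd}^{(i_0'j)}$ are free in the corner $e_{dd}^{(i_0'j)}Me_{dd}^{(i_0'j)}$, and more generally that this corner carries an $n$-fold free product structure, is the content of the explicit word-orthogonality computations and the ``restricted traveling words'' claim in the paper's Step 2, case (2-ii); it is not a consequence of Lemma \ref{L-2.2}. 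Second, you give no mechanism for the assertion $M_c'\cap M_c^\omega=\mathbb{C}$ in the atomic case --- which the paper singles out as the principal aim, since it is not in \cite{Dykema:FieldsInstituteComm97} --- and the paper obtains it either from Proposition \ref{P-3.5}-type ultraproduct estimates run inside the corner $e_{11}Me_{11}$ (case (2-i)) or from Theorem \ref{T-3.7} applied to a corner free product whose diffuse factor is $q_1N_cq_1$ supplied by the induction hypothesis (case (2-ii)). Your closing paragraph correctly locates where the labor lies, but the tools you name are not sufficient to carry it out.
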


\begin{remarks}\label{R-4.2}{\rm 
(1) The only one `largest' central minimal projection $p$ in the explicit description of $M_d$ must satisfy $\varphi_{i_0}(p) \gneqq 1/2$. (2) The condition \eqref{Eq-4.1} says that only $B(\mathcal{H}_{i'_0 j})$ with $d_{i'_0 j} \leqq 1/(1-\varphi_{i_0}(p))$ may appear in the multi-matrix part $M_d$. (3) The matrix units $f_{st}^{(i'_0 j)}$'s in \eqref{Eq-4.3} is nothing less than the `meet' of $p$ and the $e_{st}^{(i'_0 j)}$'s in the sense of Dykema \cite[\S1]{Dykema:AmerJMath95}. Note that \eqref{Eq-4.3} shows that the minimal $p \in \mathcal{Z}(M_{i_0,d})^p$ in the description of $M_d$ dominates $1_{M_d}$. (4) The proof below (see \S\S4.2.3) shows a very strong `ergodicity' of $\varphi$, that is, $((M_c)_{\varphi|_{M_c}})'\cap M_c^\omega = \mathbb{C}$, at least when both $M_1=M_{1,d}$ and $M_2=M_{2,d}$ hold. (5) Theorem \ref{T-4.1} completes to show the following expected fact: the given `non-trivial' free product von Neumann algebra $M$ is `amenable' if and only if both $M_1$ and $M_2$ are $2$ dimensional, which is analogous to the well-known fact that only $\mathbb{Z}_2\star\mathbb{Z}_2$ becomes amenable among the free product groups.}     
\end{remarks}

As mentioned in \S1 some recent results in Popa's deformation/rigidity theory due to Chifan--Houdayer and Houdayer--Ricard enable us to give more facts on the diffuse factor part $M_c$. 

\begin{corollary}\label{C-4.3} If $M_1$ and $M_2$ have separable preduals, then the diffuse factor part $M_c$ always has the following properties{\rm:} {\rm(1)} $M_c$ is prime, that is, there is no pair $P_1, P_2$ of diffuse factors such that $M_c = P_1\bar{\otimes}P_2$. {\rm(2)} If the given $M_1$ and $M_2$ are hyperfinite {\rm(}or amenable{\rm)}, then any non-hyperfinite von Neumann subalgebra of $M_c$ that is the range of a faithful normal conditional expectation from $M_c$ has no Cartan subalgebra.   
\end{corollary}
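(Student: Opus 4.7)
\textbf{Proof plan for Corollary \ref{C-4.3}.} Both items reduce to combining the structure theorem (Theorem \ref{T-4.1}) with the two cited external inputs: the primeness theorem of Chifan--Houdayer \cite[Theorem 5.2]{ChifanHoudayer:DukeMathJ10} and the no-Cartan theorem of Houdayer--Ricard \cite{HoudayerRicard:Preprint10}. The crucial observation feeding into both is that $M_c$ is non-amenable. By Theorem \ref{T-4.1} we have $M = M_d \oplus M_c$ with $M_c$ a diffuse factor, and the standing hypothesis $\dim(M_1)+\dim(M_2)\geq 5$ excludes the only amenable case in Remark \ref{R-4.2}(5) (namely $M_1 = M_2 = \mathbb{C}\oplus\mathbb{C}$). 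Hence $M$ is non-amenable; since $M_d$ is a countable multi-matrix algebra it is amenable, so the factor direct summand $M_c$ must carry all of the non-amenability.

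For \emph{part (1)}, the strategy is to realize $M_c$ itself as a non-amenable free product factor with separable predual, so that \cite[Theorem 5.2]{ChifanHoudayer:DukeMathJ10} applies verbatim. When $M_d = 0$ there is nothing to do: $M_c = M$ is already such a free product. When $M_d \neq 0$, use the explicit description in Theorem \ref{T-4.1}: the central projection $1_{M_c} \in \mathcal{Z}(M)$ is the complement of the central support of the ``largest'' minimal projection $p \in \mathcal{Z}(M_{i_0,d})^p$ (the one appearing in the construction of $M_d$). Feed the projection $p$ into Lemma \ref{L-2.2} to decompose $pMp$ and its complement as free products of corners of $M_1$ with the $N$-algebra of Lemma \ref{L-2.2}; the piece corresponding to $1_{M_c}$ then sits naturally as a free product $(N_1,\psi_1)\star(N_2,\psi_2)$ with both $N_i$ having separable predual. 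Chifan--Houdayer then gives primeness of $M_c$.

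For \emph{part (2)}, the argument is direct. Let $Q\subseteq M_c$ be a non-hyperfinite von Neumann subalgebra that is the range of a faithful normal conditional expectation $E_Q : M_c \rightarrow Q$. Composing $E_Q$ with the canonical central slice $M \rightarrow M_c$, $x \mapsto 1_{M_c}\,x\,1_{M_c}$, yields a faithful normal conditional expectation $M \rightarrow Q$, so $Q$ is expectable inside the amenable free product $M$. The main result of \cite{HoudayerRicard:Preprint10} then says $Q$ has no Cartan subalgebra.

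\emph{Main obstacle.} The only delicate point is the realization of $M_c$ as a free product factor in part (1): Lemma \ref{L-2.2} is stated for cuts by projections in $\mathcal{Z}(M_1)$, while the projection $1_{M_c}$ lives in $\mathcal{Z}(M)$. One therefore has to track the explicit description \eqref{Eq-4.3} of the multi-matrix part $M_d$ to express $1_{M_c}$ in terms of the central projection $p\in\mathcal{Z}(M_{i_0,d})^p$ featured in Theorem \ref{T-4.1}, and then bootstrap Lemma \ref{L-2.2} to the corner $1_{M_c} M 1_{M_c}$. Once this identification is in place, both conclusions are immediate consequences of the quoted theorems.
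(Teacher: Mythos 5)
Your part (1) has a genuine gap precisely at the point you flag as the ``main obstacle,'' and your proposed resolution does not work. When $M_d\neq 0$ you want to exhibit $M_c=1_{M_c}M1_{M_c}$ as a free product $(N_1,\psi_1)\star(N_2,\psi_2)$ by ``bootstrapping'' Lemma \ref{L-2.2}. But Lemma \ref{L-2.2} only applies to compressions by projections that are central in $M_1$ (or $M_2$), whereas $1_{M_c}$ is built from meets such as the $f_{st}^{(i'_0 j)}$ in \eqref{Eq-4.3} and is central only in $M$; it is not reachable by iterating cuts along $\mathcal{Z}(M_1)$ or $\mathcal{Z}(M_2)$. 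In the case where both $M_i$ are multi-matrix (case (d) of the proof of Theorem \ref{T-4.1}), no free product decomposition of $M_c$ or of any corner generating it is established anywhere in the paper --- indeed identifying $M_c$ there with a free product (e.g.\ an interpolated free group factor) is exactly the deep free-probability input of Dykema that this paper deliberately avoids. The paper sidesteps the whole issue: for $M\neq M_c$ it does not apply \cite[Theorem 5.2]{ChifanHoudayer:DukeMathJ10} as a black box to $M_c$, but instead reruns its \emph{proof} inside the continuous core of the full free product $M$, replacing the projection $p\in L(\mathbb{R})$ used there by $1_{M_c}\otimes p\in\mathcal{Z}(M)\bar{\otimes}L(\mathbb{R})$, and uses that $M_c$ is a full factor (from Theorem \ref{T-4.1}). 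If you want to keep your reduction strategy, you must either prove that $M_c$ is (stably isomorphic to) a free product factor in all cases --- which you have not done --- or justify that primeness transfers from the statement about $M$ itself; neither is immediate.

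Your part (2) is essentially the paper's argument, but with one wrinkle you should repair: the map $x\mapsto 1_{M_c}x1_{M_c}$ composed with $E_Q$ is not a unital conditional expectation of $M$ onto $Q$ when $M_d\neq0$ (it sends $1_M$ to $1_{M_c}$), so $Q$ is not presented as an expected \emph{unital} subalgebra of $M$, which is the setting of \cite[Theorem 5.4]{HoudayerRicard:Preprint10}. The paper fixes this by choosing a Cartan subalgebra $B$ of the multi-matrix algebra $M_d$ and applying Houdayer--Ricard to $B\oplus A\subset M_d\oplus Q$, a unital subalgebra of $M$ which is the range of $\mathrm{Id}_{M_d}\oplus E_Q$; the conclusion that $M_d\oplus Q$ is hyperfinite then contradicts the non-hyperfiniteness of $Q$. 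With that adjustment your part (2) goes through.
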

\begin{proof} When $M=M_c$, \cite[Theorem 5.2]{ChifanHoudayer:DukeMathJ10} directly implies the assertion (1) since we have known that $M$ is a full factor (and hence not hyperfinite). When $M\neq M_c$, the same proof of \cite[Theorem 5.2]{ChifanHoudayer:DukeMathJ10} with replacing the projection $p \in L(\mathbb{R})$ there by $1_{M_c}\otimes p \in \mathcal{Z}(M)\bar{\otimes}L(\mathbb{R})$ (which sits inside the continuous core of $M$) works well for showing that $M_c$ is prime, since $M_c$ is a full factor. The assertion (2) on $M_c$ is easily derived from \cite[Theorem 5.4 (2)]{HoudayerRicard:Preprint10} as follows. Suppose that a given non-hyperfinite von Neumann subalgebra $N$ of $M_c$ which is the range of a faithful normal conditional expectation $E_N$ from $M_c$ has a Cartan subalgebra $A$. We can choose a Cartan subalgebra $B$ of $M_d$ since $M_d$ is a multi-matrix algebra. It is clear that $B\oplus A$ becomes a Cartan subalgebra of $M_d\oplus N$, a von Neumann subalgebra of $M$, which is the range of $\mathrm{Id}_{M_d}\oplus E_N$. Hence, applying \cite[Theorem 5.4]{HoudayerRicard:Preprint10} to $B\oplus A \subset M_d\oplus N$ we conclude that $M_d\oplus N$ must be hyperfinite under the assumption of the assertion (2). But this contradicts the non-hyperfiniteness of $N$. \end{proof}

Remark that the statement of the above (2) is almost valid true even when $M$ {\rm(}or $M_c${\rm)} has the weak$^*$ complete metric approximation property {\rm(}see e.g.~\cite[Definition 2.9]{OzawaPopa:AnnMath10}{\rm)}; under that assumption the diffuse factor part $M_c$ has no Cartan subalgebra due to \cite[Theorem 5.4 (1)]{HoudayerRicard:Preprint10}.   A very recent work due to Ozawa \cite{Ozawa:Preprint10} makes it hold under the weaker assumption that $M$ {\rm(}or $M_c${\rm)} has the weak$^*$ completely bounded approximation property.  

\subsection{Proof of Theorem \ref{T-4.1}}  
This subsection is entirely devoted to the proof of Theorem \ref{T-4.1}. We start with one simple fact which will repeatedly be used in the proof without any claim. If a given (unital) inclusion of von Neumann algebras, say $N_1 \subseteq N_2$, satisfies that $(pN_1 p)'\cap(pN_2 p) = N_1' p \cap (pN_2 p) = \mathbb{C}p$ and $c_{N_1}(p) = 1$ for some non-zero $p \in N_1^p$, then the original inclusion is stably isomorphic to $pN_1 p \subseteq pN_2 p$ and hence $N_1'\cap N_2 = \mathbb{C}$. In fact, choose arbitrary $f \in \mathcal{Z}(N_1)^p$. Then $fp = pfp$ falls into $\mathcal{Z}(pN_1 p)^p$ so that $fp$ must be $p$ or $0$. If $fp=p$, then the definition of $c_{N_1}(p)$ implies $1 = c_{N_1}(p) \leq f$, i.e., $f = 1$. Also, if $fp = 0$, then $f(xp\xi) = 0$ for all $x \in N_1$ and $\xi \in \mathcal{H}$, a Hilbert space on which $N_1$ acts. This shows $f = 0$, since $N_1 p\mathcal{H}$ is a dense subspace of $c_{N_1}(p)\mathcal{H}$ and $c_{N_1}(p) = 1$. Hence $N_1$ is a factor. Therefore we conclude that $pN_1 p \subseteq pN_2 p$ is stably isomorphic to the original $N_1 \subseteq N_2$ in the same way as in the proof of the fact that the property of `$N'\cap N^\omega = \mathbb{C}$' is a stably isomorphic one (see \S\S2.2).  

\medskip
If $M_{1,d}=0$ or $M_{2,d}=0$, then the desired assertions immediately follow from Theorem \ref{T-3.4}, Theorem \ref{T-3.7} in \S3. Hence we need to deal with only the following cases: 
\begin{itemize}
\item[(a)] all $M_{1,d}, M_{1,c}$, $M_{2,d}$ and $M_{2,c}$ are not $0$.  
\item[(b)] $M_{1,d}$, $M_{1,c}$ and $M_{2,d}$ are not $0$, but $M_{2,c} = 0$. 
\item[(c)] $M_{1,d}$, $M_{2,d}$ and $M_{2,c}$ are not $0$, but $M_{1,c} = 0$. 
\item[(d)] both $M_{1,c}$ and $M_{2,c}$ are $0$. 
\end{itemize}   
Lemma \ref{L-2.2} enables us to reduce the cases (a),(b),(c) to the case of $M_{1,d}=0$ or $M_{2,d}=0$ and the case (d). Thus, if the case (d) is assumed to be confirmed already, then one can easily treat the other cases as follows. 

\subsubsection{The proof of the cases {\rm(b),(c)}.} By switching $M_1$ and $M_2$ if necessary it suffices to deal with only (b). Consider the inclusion $M \supset N:=(M_{1,d}\oplus\mathbb{C}1_{M_{1,c}})\vee M_2$. Clearly $(N,\varphi|_N)$ is the free product 
\begin{equation}\label{Eq-4.5}
(M_{1,d}\oplus\mathbb{C}1_{M_{1,c}},\varphi_1|_{M_{1,d}\oplus\mathbb{C}1_{M_{1,c}}})\star(M_2,\varphi_2).
\end{equation} 
Moreover, by Lemma \ref{L-2.2} the pair $(1_{M_{1,c}}M1_{M_{1,c}},(1/\varphi(1_{M_{1,c}}))\varphi|_{1_{M_{1,c}}M1_{M_{1,c}}})$ is also the free product 
\begin{equation}\label{Eq-4.6}
(M_{1,c},(1/\varphi_1(1_{M_{1,c}}))\varphi_1|_{M_{1,c}})\star(1_{M_{1,c}}N1_{M_{1,c}},(1/\varphi(1_{M_{1,c}}))\varphi|_{1_{M_{1,c}}N1_{M_{1,c}}})
\end{equation} 
and $c_N(1_{M_{1,c}}) = c_M(1_{M_{1,c}})$. Note here that the free product \eqref{Eq-4.5} is in the case (d) since $M_2 = M_{2,d}$. If $\varphi_1(1_{M_{1,c}})$ is strictly greater than the supremum of all $\varphi_i(e)$ with minimal $e \in \mathcal{Z}(M_{i,d})^p$, $i=1,2$, then the central support projection $c_N(1_{M_{1,c}})$ must be $1$ since all the assertions are assumed to hold in the case (d). Hence $M$ is stably isomorphic to $1_{M_{1,c}}M1_{M_{1,c}}$ that satisfies the desired assertions except the T-set formula by Theorem \ref{T-3.4} and Theorem \ref{T-3.7} due to \eqref{Eq-4.6}. We have known that $T(M) = T(1_{M_{1,c}}M1_{M_{1,c}}) = \{ t \in \mathbb{R}\,|\,\sigma_t^\varphi|_{1_{M_{1,c}}N1_{M_{1,c}}} = \mathrm{Id} = \sigma_t^{\varphi_2} \}$, and then by assumption $\sigma_t^\varphi|_{1_{M_{1,c}}N 1_{M_{1,c}}} = \mathrm{Id}$ if and only if $\sigma_t^{\varphi_1|_{M_{1,d}}} = \mathrm{Id} = \sigma_t^{\varphi_2}$ since $1_{M_{1,c}}N1_{M_{1,c}}$ contains, as direct summand, a certain non-trivial compressed algebra of the diffuse factor part $N_c$ by a projection in $(N_c)_{\varphi|_{N_c}}$. Hence we have obtained the desired T-set formula. By the T-set formula, it is easy to observe that $M$ is never of type II$_\infty$ so that the assertion on the type of $M=M_c$ holds. The triviality of $M'\cap M^\omega$ with $M=M_c$ also holds since the property of `$N'\cap N^\omega = \mathbb{C}$' is a stably isomorphic one as remarked in \S\S2.2. 

Otherwise, that is, when $\varphi_1(1_{M_{1,c}})$ is less than the supremum of all $\varphi_i(e)$ with minimal $e \in \mathcal{Z}(M_{i,d})^p$, $i=1,2$, what we have shown in the case (d) says that $N$ is either 
\begin{itemize} 
\item $M_d\oplus\mathbb{C}q\oplus N_c$ with $c_N(1_{M_{1,c}}) = q + 1_{N_c}$, or 
\item $M_d\oplus N_c$ with $c_N(1_{M_{1,c}}) = 1_{N_c}$\item $N_c$, 
\end{itemize} 
where $M_d$ is \eqref{Eq-4.2} and $N_c$ is either a factor of type II$_1$ or III$_\lambda$ with $\lambda\neq0$, or $L^\infty(0,1)\bar{\otimes}M_2(\mathbb{C})$ if $M_{1d} = \mathbb{C}$ and $M_{2d} = \mathbb{C}\oplus\mathbb{C}$ (at this point we need the structure theorem on two freely independent projections), such that $T(N_c) = \{t \in \mathbb{R}\,|\,\sigma_t^{\varphi_1}|_{M_{1,d}} = \mathrm{Id} = \sigma_t^{\varphi_2}\}$. Then, by \eqref{Eq-4.6} one easily observes that $M = M_d\oplus M_c$ or $M = M_c$, and moreover $M_c$ is stably isomorphic to $1_{M_{1,c}}M1_{M_{1,c}}$. Therefore the exactly same argument as the previous one shows the desired assertions. Hence we are done in the cases (b),(c). 

\subsubsection{The proof of the case {\rm(a)}.} Consider the inclusion $M \supset N:=(M_{1,d}\oplus\mathbb{C}1_{M_{1,c}})\vee M_2$. Clearly $(N,\varphi|_N)$ is the free product 
\begin{equation}\label{Eq-4.7}
(M_{1,d}\oplus\mathbb{C}1_{M_{1,c}},\varphi_1|_{M_{1,d}\oplus\mathbb{C}1_{M_{1,c}}})\star(M_2,\varphi_2).
\end{equation} 
Moreover, by Lemma \ref{L-2.2} the pair $(1_{M_{1,c}}M1_{M_{1,c}},(1/\varphi(1_{M_{1,c}}))\varphi|_{1_{M_{1,c}}M1_{M_{1,c}}})$ is also the free product 
\begin{equation*}
(M_{1,c},(1/\varphi_1(1_{M_{1,c}}))\varphi_1|_{M_{1,c}})\star(1_{M_{1,c}}N1_{M_{1,c}},(1/\varphi(1_{M_{1,c}}))\varphi|_{1_{M_{1,c}}N1_{M_{1,c}}})
\end{equation*} 
and $c_N(1_{M_{1,c}}) = c_M(1_{M_{1,c}})$. Note that the free product \eqref{Eq-4.7} falls into the case (c), and therefore, by using what we have established in dealing with the cases (b),(c) we can conclude the desired assertions in the same way as in the cases (b),(c). Hence we are done in the case (a). 

\subsubsection{The proof of the case {\rm(d)}.} The proof below will essentially be done by induction together with several case-by-case arguments (and thus the proof is not so difficult analytically, though it looks complicated at a glance). The principal aim here is to prove $(M_{\varphi|_{M_c}})'\cap M_c^\omega = \mathbb{C}$ (since \cite{Dykema:FieldsInstituteComm97} does not discuss it in full generality). In the course of proving it, we will `re-prove' all the facts on the structure of $M_d$ presented in \cite{Dykema:FieldsInstituteComm97} such as \eqref{Eq-4.1}--\eqref{Eq-4.4}. The desired T-set formula of $M_c$ follows from $(M_{\varphi|_{M_c}})'\cap M_c^\omega = \mathbb{C}$ as well as the fact that every $x_{st}^{(i_0' j)} := e_{st}^{(i_0' j)} - f_{st}^{(i_0' j)}$ with $f_{st}^{(i_0' j)}$ in \eqref{Eq-4.3} defines a nonzero eigenvector in $M_c$ for $\sigma^\varphi$. Hence our proof below is independent of that given in \cite{Dykema:FieldsInstituteComm97}.     

\medskip
{\bf Step 1 -- abelian case.} We first consider the  following special case: 
$$
(M_1,\varphi_1) = \sideset{}{^\oplus}\sum_{k=1}^{K_1} \overset{p_k}{\underset{\alpha_k}{\mathbb{C}}}, \quad 
(M_2,\varphi_2) = \sideset{}{^\oplus}\sum_{k=1}^{K_1}  \overset{q_k}{\underset{\beta_k}{\mathbb{C}}} \quad \text{(possibly with $K_1,K_2 = \infty$)}
$$
(we use the notations in \cite{Dykema:DukeMathJ93}), where we may and do assume that $\alpha_1\geq\alpha_2\geq\cdots\gneqq0$, $\beta_1\geq\beta_2\geq\cdots\gneqq0$ and $\alpha_1 \geq \beta_1$. The free products of this form were already studied in detail by Dykema \cite{Dykema:DukeMathJ93}. His consequence agrees with the statements of Theorem \ref{T-4.1}. (We should point out that he further proved that the diffuse factor part $M_c$ is always isomorphic to an (interpolated) free group factor in this case.) The case of both $K_i < \infty$ ($i=1,2$) is treated as \cite[Theorem 2.3]{Dykema:DukeMathJ93}. However the same tedious and elementary induction argument as there with the help of Theorem \ref{T-3.7} instead of e.g.~\cite[Lemma 1.3, Lemma 1.4, Remark 1.5]{Dykema:DukeMathJ93} (which heavily depend on so-called `random matrix machinery') shows the desired assertions. The case where either $K_1$ or $K_2$ is infinite is treated in \cite[Theorem 4.6]{Dykema:DukeMathJ93}, but the argument presented in Step 3 below reduces this case to the previous one, i.e., both $K_i < \infty$.    

\medskip
{\bf Step 2 -- non-commutative but the centers are finite dimensional -- most essential step.} Assume that both $M_1=M_{1,d}$ and $M_2 = M_{2,d}$ have the finite dimensional centers, that is, both $J_1$ and $J_2$ are finite sets. The proof will be done by induction in the number of non-trivial $B(\mathcal{H})$-components so that we have assumed that both $M_1$ and $M_2$ have the finite dimensional centers. Thanks to Step 1, as induction hypothesis we may assume, by switching $M_1$ and $M_2$ if necessary, that $M$ has the following structure: 
\begin{itemize} 
\item $M_2$ has $B(\mathcal{K})$ (possibly with $\mathrm{dim}(\mathcal{K})=\infty$) as a direct summand, i.e., $M_2 = B(\mathcal{K})\oplus Q_2$, 
\item the density operator of $\varphi_2|_{B(\mathcal{K})}$ is diagonalized by a matrix unit system $\{e_{ij}\}_{i,j}$ of $B(\mathcal{K})$, 
\item With letting 
$$
N_2 := \sideset{}{^\oplus}\sum_i \mathbb{C}e_{ii}\oplus Q_2\ (\subset M_2), 
$$
$N := M_1 \vee N_2$ equipped with $\varphi|_N$ is nothing less than the free product $(M_1,\varphi_1)\star(N_2,\varphi_2|_{N_2})$ that satisfies the following: $N$ is decomposed into a direct sum $N = N_d\oplus N_c$ of a multi-matrix algebra $N_d$ whose structure agrees with the statement of Theorem \ref{T-4.1} and a diffuse factor $N_c$ with $((N_c)_{\varphi|_{N_c}})'\cap N_c^\omega = \mathbb{C}$.    
\end{itemize} 
(Remark that the above assumption on $N_c$ does not hold as it is only when $M_1=\mathbb{C}\oplus\mathbb{C}$ and $M_2=B(\mathcal{K})$ with $\mathrm{dim}(\mathcal{K}) = 2$, but the argument below still works in the case too with the help of the structure theorem on two freely independent projections.) There are two possibilities, that is, 
\begin{itemize}
\item[(2-i)] at least one of the diagonals $e_{ii}$'s falls in $N_c$, 
\item[(2-ii)] no diagonal $e_{ii}$ falls in $N_c$, i.e., both $e_{ii}1_{N_c} \neq 0$ and $e_{ii}1_{N_d}\neq0$ hold for every $i$. 
\end{itemize}
Note that only $\mathrm{dim}(\mathcal{K}) < \infty$ is possible in the case (2-ii).   

\medskip
{\bf Case (2-i).} This case has no counterpart in \cite{Dykema:DukeMathJ93}.  

Let us assume that $e_{kk} \in N_c$ for some $k$. We may assume $k=1$. Consider the following sets of words: 
\begin{equation*} 
e_{1i}\,(\underbrace{M_1^\circ\quad\cdots\quad M_1^\circ}_{\text{alternating in $M_1^\circ, M_2^\circ$}})\,e_{j1}, \quad (\text{for all possible $i,j$}). 
\end{equation*}
Let $\mathcal{X}_1$, $\mathcal{X}_2$, $\mathcal{X}_3$ and $\mathcal{X}_4$ be the closed subspaces, in the standard Hilbert space $\mathcal{H} := L^2(M,\varphi)$ via $\Lambda_\varphi$, generated by 
$e_{11}(M_1^\circ\cdots M_1^\circ)e_{11}$, by $e_{1i}(M_1^\circ\cdots M_1^\circ)e_{j1}$ with $i\neq 1 \neq j$, by $e_{1i}(M_1^\circ\cdots M_1^\circ)e_{11}$ with $i\neq 1$ and by $e_{11}(M_1^\circ\cdots M_1^\circ)e_{j1}$ with $j\neq 1$, respectively. It is easy to see, due to $e_{ij} \in M_2^\circ$ for $i\neq j$, that those $\mathcal{X}_1$, $\mathcal{X}_2$, $\mathcal{X}_3$ and $\mathcal{X}_4$ are mutually orthogonal and moreover that 
\begin{equation*} 
\mathcal{H}_0 := \overline{\Lambda_{\varphi}(e_{11}Me_{11})} = \mathbb{C}\Lambda_{\varphi}(e_{11}) \oplus \mathcal{X}_1 \oplus \mathcal{X}_2 \oplus \mathcal{X}_3 \oplus \mathcal{X}_4.
\end{equation*}
(The last assertion immediately follows from the facts that $M_2 + \mathrm{span}\big(\Lambda^\circ(M_1^\circ,M_2^\circ)\setminus M_2^\circ\big)$ forms a dense $*$-subalgebra of $M$ in any von Neumann algebra topology and that $M_2 e_{11}$ and $e_{11}M_2$ are generated as linear subspaces by the $e_{j1}$'s and the $e_{1i}$'s, respectively.) By assumption one can choose $u \in (N_c)_{\varphi|_{N_c}}$ in such a way that $u^* u=uu^* = e_{11}$ and $\varphi(u^n) = \delta_{n0}\varphi(e_{11})$, since $e_{11} \in N_c$ and $\sigma_t^{\varphi_2}(e_{11}) = e_{11}$ for every $t \in \mathbb{R}$. Then we can define a unitary operator $U$ on $\mathcal{H}_0$ by $U\Lambda_\varphi(x) := \Lambda_\varphi(uxu^*)$ for $x \in e_{11}Me_{11}$. Since $N_2 + \mathrm{span}\big(\Lambda^\circ(M_1^\circ,N_2^\circ)\setminus N_2^\circ\big)$ forms a dense $*$-subalgebra of $N$ in any von Neumann algebra topology, every non-trivial power $u^n$ can clearly be approximated, due to Kaplansky's density theorem, by a bounded net of linear combinations of words in 
$$
e_{11}\,(\underbrace{M_1^\circ\quad\cdots\quad M_1^\circ}_{\text{alternating in $M_1^\circ, \underline{N_2^\circ}$}})\,e_{11}. 
$$
Trivially $\big\{U^n\mathcal{X}_2\big\}_{n\in\mathbb{Z}}$ is a family of mutually orthogonal subspaces of $\mathcal{H}_0$. Since $N_2^\circ e_{1i} = \mathbb{C}e_{1i} = e_{1i}N_2^\circ$ and $N_2^\circ e_{j1} = \mathbb{C}e_{j1} = e_{j1}N_2^\circ$ (both sit in $M_2^\circ$), we can prove that both $\big\{U^n\mathcal{X}_3\big\}_{n\in\mathbb{Z}}$ and $\big\{U^n\mathcal{X}_4\big\}_{n\in\mathbb{Z}}$ also become families of mutually orthogonal subspaces of $\mathcal{H}_0$. The essential point of showing the mutual orthogonality of $\big\{U^n\mathcal{X}_4\big\}_{n\in\mathbb{Z}}$ is as follows. (Confirming that of $\big\{U^n\mathcal{X}_3\big\}_{n\in\mathbb{Z}}$ is easier than this case.) The problem is reduced to showing that any word in 
$$
e_{1j}\underbrace{M_1^\circ\quad\cdots\quad M_1^\circ}_{\text{alternating in $M_1^\circ,M_2^\circ$}}
e_{11}\underbrace{M_1^\circ\quad\cdots\quad M_1^\circ}_{\text{alternating in $M_1^\circ,N_2^\circ$}}
e_{11}\underbrace{M_1^\circ\quad\cdots\quad M_1^\circ}_{\text{alternating in $M_1^\circ,M_2^\circ$}}
e_{j1}\underbrace{M_1^\circ\quad\cdots\quad M_1^\circ}_{\text{alternating in $M_1^\circ,N_2^\circ$}}
e_{11}
$$
is in the kernel of $\varphi$. By approximating each letter by analytic elements we may assume that each letter of the word in question is analytic. Hence by using \cite[Exercise VIII.2(2)]{Takesaki:Book2} again and again we can transform the question to showing the same one for 
$$
e_{11}\underbrace{M_1^\circ\quad\cdots\quad M_1^\circ}_{\text{alternating in $M_1^\circ,\underline{N_2^\circ}$}}
e_{11}\underbrace{M_1^\circ\quad\cdots\quad M_1^\circ}_{\text{alternating in $M_1^\circ,M_2^\circ$}}
e_{j1}\underbrace{M_1^\circ\quad\cdots\quad M_1^\circ}_{\text{alternating in $M_1^\circ,N_2^\circ$}}
e_{1j}\underbrace{M_1^\circ\quad\cdots\quad M_1^\circ}_{\text{alternating in $M_1^\circ,M_2^\circ$}}. 
$$   
({\it n.b.}~$\varphi_i(\sigma_z^{\varphi_i}(x)) = \varphi_i(x)$, $z \in \mathbb{C}$, holds for every analytic $x$, $i=1,2$.) This question can easily be settled by using $N_2^\circ e_{j1} = \mathbb{C}e_{j1} \subseteq M_2^\circ$. 

Choose arbitrary $x \in \{u\}'\cap(e_{11}Me_{11})^\omega = \{u\}'\cap e_{11}M^\omega e_{11}$ with representative $(x(m))_m$. Denote by $P_{\mathcal{X}_k}$ the projection from $\mathcal{H}_0$ onto $\mathcal{X}_k$ for $k = 1,2,3,4$. Then we can prove, in the exactly same way as in the proof of Proposition \ref{P-3.5}, that for a given $\gamma>0$ there is a neighborhood at $\omega$ on which 
$$
\left\Vert P_{\mathcal{X}_k}\Lambda_\varphi(x(m))\right\Vert_\varphi < \gamma, \quad k=2,3,4.   
$$
By the assumption on $N_c$ one can choose an invertible element $y_{\ell}^\circ$ of $e_{\ell \ell}Ne_{\ell \ell}$ with $\ell \neq 1$ in such a way that $\varphi(y_\ell^\circ) = 0$ and $\sigma_t^\varphi(y_\ell^\circ) = y_\ell^\circ$ for every $t \in \mathbb{R}$. (See the proof of Theorem \ref{T-3.7}.) Set $y^\circ := e_{1\ell}y_\ell^\circ e_{\ell 1}$. Clearly $\sigma_t^\varphi(y^\circ)=y^\circ$ holds for every $t \in \mathbb{R}$ (since the $e_{ij}$'s diagonalize the density operator of $\varphi_2|_{B(\mathcal{K})}$) and $y^\circ$ can be approximated, due to Kaplansky's density theorem, by a bounded net consisting of linear combinations of words in $e_{1\ell}(M_1^\circ\cdots M_1^\circ)e_{\ell 1}$. Since 
\begin{align*} 
y^\circ\Lambda_\varphi(e_{11}(M_1^\circ\cdots M_1^\circ)e_{11}) &\subseteq \Lambda_\varphi(e_{1\ell}(M_1^\circ\cdots M_1^\circ)e_{11}), \\ 
J(y^\circ)^*J\Lambda_\varphi(e_{11}(M_1^\circ\cdots M_1^\circ)e_{11}) &= 
\Lambda_\varphi(e_{11}(M_1^\circ\cdots M_1^\circ)e_{11}y^\circ) \subseteq 
\Lambda_\varphi(e_{11}(M_1^\circ\cdots M_1^\circ)e_{\ell 1})   
\end{align*} 
we see, as in the proof of Proposition \ref{P-3.5}, that    
\begin{align*} 
\Lambda_{\varphi^\omega}(y^\circ(x-(1/\varphi_2(e_{11}))\varphi^\omega(x)e_{11})) 
&= 
\Big[\big(y^\circ P_{\mathcal{X}_1}\Lambda_\varphi(x(m))\big)_m\Big], \\
\Lambda_{\varphi^\omega}((x-(1/\varphi_2(e_{11}))\varphi^\omega(x)e_{11})y^\circ) 
&= 
\Big[\big(J(y^\circ)^*J P_{\mathcal{X}_1}\Lambda_\varphi(x(m))\big)_m\Big]
\end{align*} 
are orthogonal to each other in the ultraproduct $\mathcal{H}_0^\omega$ ($\subset \mathcal{H}^\omega$), where $J$ is the modular conjugation of $M \curvearrowright \mathcal{H}$. This immediately implies that $\Vert y^\circ(x-(1/\varphi_2(e_{11}))\varphi^\omega(x)e_{11})\Vert_{\varphi^\omega} \leq \Vert [x,y^\circ] \Vert_{\varphi^\omega}$. Therefore $ (e_{11} M_\varphi e_{11})' \cap (e_{11}M^\omega e_{11}) \subseteq \{u,y^\circ\}'\cap (e_{11}Me_{11})^\omega = \mathbb{C}e_{11}$ since $y^\circ$ is invertible in $e_{11}Me_{11}$. Note that every $e_{ii}Me_{ii}\supseteq e_{ii}M_\varphi e_{ii}$ is conjugate to $e_{11}Me_{11} \supseteq e_{11}M_\varphi e_{11}$ via $\mathrm{Ad}e_{1i}$, and in particular, every $e_{ii}M_\varphi e_{ii}$ is a factor. Hence one can see that $c_{M_\varphi}(e_{11}) = (\sum_i 1_{N_d}e_{ii}) + 1_{N_c}$, since every $e_{ii}$ has a non-trivial part in $(N_c)_{\varphi|_{N_c}}$ by the induction hypothesis here. Consequently $M = M_d\oplus M_c$ so that $M_d = N_d(1_{N_d}-\sum_i 1_{N_d}e_{ii})$ and $\big((M_c)_{\varphi|_{M_c}}\big)'\cap M_c^\omega = \mathbb{C}$. In particular, $B(\mathcal{K})$ has no contribution to the multi-matrix part $M_d$, and this agrees with the condition \eqref{Eq-4.1}.  

\medskip
{\bf Case (2-ii).} We borrow some ideas from the proof of \cite[Proposition 3.2]{Dykema:DukeMathJ93}, and then apply what we have provided in \S3 straightforwardly. 

Since $N_2 + \mathrm{span}\big(\Lambda^\circ(M_1^\circ,N_2^\circ)\setminus N_2^\circ\big)$ forms a dense $*$-subalgebra in $N$ in any von Neumann algebra topology and also since every $e_{ii}$ is minimal and central in $N_2$, the  von Neumann subalgebra $e_{1i} Ne_{i1}$ of $e_{11}Me_{11}$ $(1 \leqq i \leqq n$) is the closure of the linear span of $e_{11}$ and 
\begin{equation*} 
e_{1i}\,(\underbrace{M_1^\circ\quad\cdots\quad M_1^\circ}_{\text{alternating in $M_1^\circ, N_2^\circ$}})\,e_{i1} 
\end{equation*}
in any von Neumann algebra topology, and thus the kernel of $\varphi|_{e_{1i}Ne_{i1}}$ is the closure of the linear span of $e_{1i}(M_1^\circ\cdots M_1^\circ)e_{i1}$'s in the same topology. It follows that the  $e_{1i}Ne_{i1}$'s are free in $\big(e_{11}Me_{11}, (1/\varphi(e_{11}))\,\varphi|_{e_{11}Me_{11}}\big)$. By the assumption here, there is a minimal and central projection $p \in M_1$ such that the multi-matrix part $N_d$ has the component 
$$
\overset{p\wedge e_{11}}{\underset{\varphi_1(p)+\varphi_2(e_{11})-1}{\mathbb{C}}}\oplus\cdots\oplus\overset{p\wedge e_{nn}}{\underset{\varphi_1(p)+\varphi_2(e_{nn})-1}{\mathbb{C}}} \quad \text{with all $\varphi_1(p)+\varphi_2(e_{ii})-1 \gneqq 0$} 
$$
with $n=\mathrm{dim}(\mathcal{K}) < +\infty$ and moreover that every $e'_{ii} := e_{ii}-p\wedge e_{ii}$ falls in $(N_c)_{\varphi|_{N_c}}$. Notice that every $\varphi(e'_{ii})$ is $1-\varphi_1(p)$ since \eqref{Eq-4.4} holds, i.e., $\varphi(p\wedge e_{ii}) = \varphi_1(p) + \varphi_2(e_{ii}) -1$, by the assumption on $N_d$, and hence all $e'_{ii}$ are (Murray--von Neumann) equivalent to each other in $(N_c)_{\varphi|_{N_c}}$ since $(N_c)_{\varphi|_{N_c}}$ is a factor. (Here and in the next line the structure theorem on two freely independent projections is necessary when $M_1 = \mathbb{C}\oplus\mathbb{C}$ and $M_2 = B(\mathcal{K})$ with $\mathrm{dim}(\mathcal{K}) = 2$.) Therefore we can choose partial isometries $y_{i1}$ ($2\leq i \leq n$) from $(N_c)_{\varphi|_{N_c}}$ in such a way that $y_{i1}^*y_{i1} = e'_{11}$ and $y_{i1}y_{i1}^* = e'_{ii}$. The von Neumann subalgebra $P$ generated by $e_{11}Ne_{11} = \mathbb{C}(p\wedge e_{11})+ e'_{11}N_c e'_{11}$ and the $\mathbb{C}e_{1i}(p\wedge e_{ii})e_{i1} + \mathbb{C}e_{1i}e'_{ii}e_{i1}$'s ($2 \leq i \leq n$) in $e_{11}Me_{11}$ is nothing but the $n$-fold free product von Neumann algebra of  
$$
\overset{p\wedge e_{11}}{\underset{1-\frac{1-\varphi_1(p)}{\varphi_2(e_{11})}}{\mathbb{C}}}\oplus\overset{q_1 := e'_{11}}{\underset{\frac{1}{\varphi_2(e_{11})}\varphi|_{e'_{11}N_c e'_{11}}}{e'_{11}N_c e'_{11}}} \quad \text{and} \quad \overset{e_{1i}(p\wedge e_{ii})e_{i1}}{\underset{1-\frac{1-\varphi_1(p)}{\varphi_2(e_{ii})}}{\mathbb{C}}}\oplus\,\overset{q_i := e_{1i}e'_{ii} e_{i1}}{\underset{\frac{1-\varphi_1(p)}{\varphi_2(e_{ii})}}{\mathbb{C}}} \quad (2 \leqq i \leqq n). 
$$
In what follows we may and do assume that $\varphi_2(e_{11})$ is the smallest among the $\varphi_2(e_{ii})$'s (and thus $1-\frac{1-\varphi_1(p)}{\varphi_2(e_{11})} \leq 1-\frac{1-\varphi_1(p)}{\varphi_2(e_{ii})}$). The inductive use of Theorem \ref{T-3.7} and Lemma \ref{L-2.2} together with the structure theorem on two freely independent projections enables us to show that $P = \overset{r}{\mathbb{C}}\oplus P_c$, where
\begin{equation*} 
r := \bigwedge_{i=1}^n e_{1i}(p\wedge e_{ii})e_{i1} \quad 
\varphi(r) = \varphi_2(e_{11})\Big(\max\Big\{1-(1-\varphi_1(p))\sum_{i=1}^n\frac{1}{\varphi_2(e_{ii})},0\Big\}\Big)
\end{equation*} 
(possibly with $r=0$), and also $((P_c)_{\varphi|_{P_c}})'\cap P_c^\omega = \mathbb{C}$ holds since $(e'_{11}N_c e'_{11})_{\varphi|_{e'_{11}N_c e'_{11}}}$ is diffuse. We also have $1_{P_c} = 1_P-r = \bigvee_{i=1}^n q_i$, since $q_i = 1_P - e_{1i}(p\wedge e_{ii})e_{i1}$. Let us prove $((q_1 Mq_1)_{\varphi|_{q_1 Mq_1}})'\cap(q_1 M^\omega q_1) = \mathbb{C}q_1$. 

Consider the von Neumann subalgebra $\{q_1,q_i\}''$ in $P$ ($2 \leq i \leq n$). The structure theorem on two freely independent projections enables us to choose a partial isometry $z_i \in \{q_1,q_i\}''$ so that $z_i^* z_i = q_i$ and $z_i z_i^* \leq q_1$, since $\varphi_2(e_{11}) \leq \varphi_2(e_{ii})$ (and thus $\varphi(q_1) \geq \varphi(q_i)$, $2\leq i \leq n$). Then $w_i := z_i e_{1i} y_{i1}$ is an isometry in $q_1 M q_1$ with $w_i w_i^* = z_i z_i^* \leq q_1$. Note that $e_{11}M e_{11}$ is generated by $q_1 N q_1 = q_1 N_c q_1$ and the $e_{1i}y_{i1}$'s together with $e_{11}$, and hence $q_1 M q_1$ is generated by $q_1 N q_1 = q_1 N_c q_1$, $q_1\{q_1,q_2,\dots,q_n\}''q_1$ and the $w_i$'s. (To see this, insert $q_i = z_i^*z_i$ before each $w_i$ and after each $w_i^*$ in any possible word in $q_1 N q_1$ and the $w_i$, $w_i^*$'s, and then regroup the resulting word.) We write $Q := \{q_1,q_2,\dots,q_n\}''$ in $P$ and also $q'_i := z_i z_i^*$ ($\leq q_1$), $2 \leq i \leq n$, for simplicity.       

\begin{claim} {\rm ({\it c.f.}~\cite[Claim 3.2b and Claim 3.2c]{Dykema:DukeMathJ93})} The `restricted' traveling words in  
$(q_1 Q q_1)^\circ = \mathrm{Ker}(\varphi|_{q_1 Q q_1})$, $\{w_i^{\ell}, (w_i^*)^{\ell}\,|\, \ell \in \mathbb{N}\}$, $2\leqq i \leqq n$, form a total subset of the kernel of the restriction of $\varphi$ to $q_1 Q q_1 \vee \{w_i\,|\,2\leq i\leq n\}''$ in any von Neumann algebra topology. Here a traveling word $x_1 x_2 \cdots x_\ell$ is said to be `restricted' if $x_k = q'_i x_k q'_i$ holds {\rm(}i.e., $x_k$ must fall in $(q'_i Q q'_i)^\circ = \mathrm{Ker}(\varphi|_{q'_i Q q'_i})${\rm )} when $x_{k-1} = (w_i^*)^{\ell_1}$, $x_k \in (q_1 Q q_1)^\circ$ and $x_{k+1} = (w_i)^{\ell_2}$ with some $\ell_1,\ell_2 \in \mathbb{N}$. Moreover any `restricted' traveling word in $(q_1 Q q_1)^\circ$, $\{w_i^{\ell}, (w_i^*)^{\ell}\,|\, \ell \in \mathbb{N}\}$'s and $(q_1 N q_1)^\circ = \mathrm{Ker}(\varphi|_{q_1 N q_1})$ {\rm(}in the above sense{\rm)} is in the kernel of $\varphi$.           
\end{claim}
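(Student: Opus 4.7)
\medskip
\noindent\textbf{Proof plan.}
The plan is to handle the two assertions sequentially: first the density claim by a reduction calculus on the generators, and then the vanishing ``moreover'' statement by expansion back into the original free product $M_1\star M_2$ and an appeal to freeness.

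For density, by Kaplansky it suffices to show that every monomial in $q_1 Q q_1$-letters and the partial isometries $w_i,w_i^*$ ($2\le i\le n$) is, modulo a scalar multiple of $q_1$, a finite linear combination of restricted traveling words. Three elementary moves accomplish this: consecutive powers combine, $w_i^* w_i = q_1$ and $w_i w_i^*=q'_i\in q_1 Q q_1$ absorb and split the monomial, and when $a\in q_1 Q q_1$ is sandwiched between $(w_i^*)^{\ell_1}$ and $w_i^{\ell_2}$, the identity $w_i^* a w_i = w_i^*(q'_i a q'_i)w_i$---immediate from $w_i w_i^*=q'_i$, which gives $w_i^*q'_i=w_i^*$, $q'_i w_i=w_i$, and $w_i^*(q_1-q'_i)=0$---forces the middle letter into $q'_i Q q'_i$, i.e.\ exactly the restricted form. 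Splitting each $q_1 Q q_1$-letter into its $\varphi$-mean and its mean-zero part finishes the reduction, and passing to weak closure yields density.

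For the ``moreover'' claim I would expand each letter of a restricted traveling word in the original free product. An element $b\in(q_1 Q q_1)^\circ$ lies in $q_1 P q_1$; because $P$ is the $n$-fold free product of the algebras listed just above the claim, $b$ is a $\sigma$-weak limit of linear combinations of traveling words in those free components sandwiched by $q_1$, and substituting $q_1=e'_{11}$ and $q_i=e_{1i}e'_{ii}e_{i1}$ ($i\ge 2$) rewrites each such word as an alternating $(M_1^\circ,M_2^\circ)$-word in $M$. Similarly, any $c\in(q_1 N q_1)^\circ = (e'_{11} N_c e'_{11})^\circ$ is a $\sigma$-weak limit of linear combinations of traveling words in $(M_1^\circ,N_2^\circ)$ beginning and ending with $M_1^\circ$-letters, so the $e'_{11}$-sandwiching is absorbed; and each $w_i^\ell$, $(w_i^*)^\ell$ unpacks via $w_i = z_i e_{1i} y_{i1}$ (with $z_i\in\{q_1,q_i\}''\subset P$, $e_{1i}\in M_2^\circ$, and $y_{i1}\in N_c$) into similarly alternating words. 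Freeness of $M_1$ and $M_2$ in $(M,\varphi)$ then annihilates $\varphi$ on every summand of the expansion.

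The main obstacle will be the bookkeeping that shows the restricted condition rules out \emph{every} collapse into a shorter, non-alternating word. The critical junction is $(w_i^*)^{\ell_1}\,a\,w_i^{\ell_2}$ with $a\in(q_1 Q q_1)^\circ$: the constraint $a=q'_i a q'_i$ keeps a non-trivial factor from the $i$-th free component of $P$ between the trailing $e_{i1}$ inside $(w_i^*)^{\ell_1}$ and the leading $e_{1i}$ inside $w_i^{\ell_2}$, preventing them from degenerating to $e_{ii}\in M_2$; without this restriction $w_i^*(\lambda q_1)w_i=\lambda q_1$ would fall out of the kernel. The remaining junction types---blocks $w_i^\ell$ and $w_j^{\ell'}$ with $i\ne j$, or a $w$-block meeting a $(q_1 N q_1)^\circ$-letter, or two consecutive letters from $q_1 Q q_1$ and $q_1 N q_1$---are handled automatically by the freeness of the components of $P$ and by the fact that $e_{1i},e_{i1}\in M_2^\circ$ for $i\ne j$, so that a careful case analysis along these lines completes the argument.
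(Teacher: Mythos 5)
Your proposal is correct and follows essentially the same route as the paper: the density half is reduced by exactly the moves $w_i^*w_i=q_1$, $w_iw_i^*=q_i'$ and the mean-zero splittings relative to $q_1$ and $q_i'$, and the vanishing half is proved, as in the paper, by unpacking each letter (through $w_i=z_ie_{1i}y_{i1}$ and the free-product structures of $P$ and $N$) into alternating $(M_1^\circ,M_2^\circ)$-words annihilated by freeness, with your critical-junction analysis matching the paper's key observation that $z_i^*(q_i'Qq_i')^\circ z_i\subseteq (q_iQq_i)^\circ$ (via $\sigma_t^\varphi(z_i)=z_i$). The only quibble is your aside that ``without the restriction $w_i^*(\lambda q_1)w_i=\lambda q_1$ would fall out of the kernel'': $\lambda q_1$ is not in $(q_1Qq_1)^\circ$ to begin with, and the genuine failure mode is rather $a\in(q_1Qq_1)^\circ$ with $\varphi(q_i'aq_i')\neq 0$ (e.g.\ $a=q_i'-\tfrac{\varphi(q_i')}{\varphi(q_1)}q_1$), for which $w_i^*aw_i$ is a nonzero scalar multiple of $q_1$.
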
 
\begin{proof} Notice that $w_i^* w_i = q_1$ and $w_i w_i^* = q'_i \in q_1\{q_1,q_2,\dots,q_n\}''q_1$, and thus it is not difficult, by using the decompositions 
\begin{align*} 
x &\mapsto \frac{\varphi(x)}{\varphi(q_1)}q_1 + \big(x-\frac{\varphi(x)}{\varphi(q_1)}q_1\big) \in \mathbb{C}q_1 + \mathrm{Ker}(\varphi) \quad \text{(if $x \in q_1 N q_1 \cup q_1 Q q_1$)}, \\
x &\mapsto \frac{\varphi(x)}{\varphi(q'_i)}q'_i + \big(x-\frac{\varphi(x)}{\varphi(q'_i)}q'_i\big) \in \mathbb{C}q'_i + (q'_i Q q'_i)^\circ \quad \text{(if $x \in q'_i Q q'_i$, $2\leq i \leq n$)}
\end{align*} 
again and again, to see that $q_1$ and all the possible `restricted' traveling words in $(q_1 Q q_1)^\circ$, $\{w_i^{\ell}, (w_i^*)^{\ell}\,|\, \ell \in \mathbb{N}\}$'s form a total subset of $q_1 Q q_1 \vee \{w_i\,|\,2\leq i\leq n\}''$ in any von Neumann algebra topology. Hence it suffices to prove that $\varphi(x) = 0$ for any `restricted' traveling word $x$ in $(q_1 Q q_1)^\circ$, $\{w_i^{\ell}, (w_i^*)^{\ell}\,|\, \ell \in \mathbb{N}\}$'s and $(q_1 N q_1)^\circ$. In the rest of the proof we need to give heed to the following simple fact: $z_i^* (q'_i Q q'_i)^\circ z_i \subseteq (q_i Q q_i)^\circ$. This is due to $\sigma_t^\varphi(z_i) = z_i$ for every $t \in \mathbb{R}$. In fact this fact is the reason why usual traveling words are not suitable and `restricted' ones are necessary here. Although the discussion below is almost the same as in \cite[Claim 3.2c]{Dykema:DukeMathJ93}, we do give a sketch for the reader's convenience. 

Regrouping a given `restricted' traveling word we can make it an alternating word in 
$$
\Omega_1 = (q_1 N q_1)^\circ \cup \bigcup_{2\leq i \leq n}\big(q_1 N y_{i1}^* \cup y_{i1}Nq_1\cup y_{i1}(q_1 N q_1)^\circ y_{i1}^*\big) \cup \bigcup_{2\leq i \neq j \leq n} y_{i1} N y_{j1}^*, 
$$ $$
\Omega_2 = (q_1 Q q_1)^\circ \cup \bigcup_{2\leq i \leq n}\big(e_{i1}z_i^* Q q_1 \cup q_1 Q z_i e_{1i} \cup e_{i1}z_i^*(q'_i Q q'_i)^\circ z_i e_{1i}\big) \cup \bigcup_{2\leq i \neq j \leq n} e_{i1}z_i^* Q z_j e_{1j}
$$
with some constraints due to the fact that $w_i = z_i e_{1i}y_{i1}$, $2 \leq i \leq n$, and their adjoints appear, as blocks,  in the given `restricted' traveling word. Then we firstly approximate each letter from $\Omega_2$ by linear combinations of words in $c_i := e_{1i}\tilde{c}_i e_{i1}$ with $\tilde{c}_i := e'_{ii} - \frac{\varphi(e'_{ii})}{\varphi_2(e_{ii})}e_{ii}$, $2\leq i \leq n$, and $e_{ij}$'s. For example any element in $e_{i1}z^*_i(q'_i Q q'_i)^\circ z_i e_{1i} \subseteq e_{i1}(q_i Q q_i)^\circ e_{1i}$ can be approximated, due to Kaplansky's density theorem, by a bounded net consisting of linear combinations of words of the form: 
$$
e_{i1} q_i \underbrace{c_i c_{i_2}\cdots c_{i_{n-1}} c_i}_{\text{non-trivial, traveling}} q_i e_{1i} = e'_{ii}\tilde{c}_i e_{ii_2}\tilde{c}_{i_2}e_{i_2 i_3}\cdots e_{i_{n-1}i}\tilde{c}_i e'_{ii}.    
$$
(Note here that $q_i c_i$ is a scalar multiple of $q_i$.) Hence for any $x \in (q'_i Q q'_i)^\circ$ the element $y_{i1}^* e_{i1} z_i^* x z_i e_{1i}y_{i1}$ is approximated by linear combinations of 
$$
y_{i1}^* e'_{ii}\tilde{c}_i e_{ii_2}\tilde{c}_{i_2}e_{i_2 i_3}\cdots e_{i_{n-1}i}\tilde{c}_i e'_{ii} y_{i1} = 
(y_{i1}^* \tilde{c}_i) e_{ii_2} \tilde{c}_{i_2} e_{i_2 i_3} \cdots e_{i_{n-1}i}(\tilde{c}_i y_{i1}). 
$$
Since $y_{i1}^* \tilde{c}_i \in e_{11} N e_{ii}$ and $\tilde{c}_i y_{i1} \in e_{ii} N e_{11}$, we finally approximate the right-hand side above by linear combinations of words in 
$$
e_{11}\underbrace{M_1^\circ\cdots M_1^\circ}_{\text{alternating in $M_1^\circ, N_2^\circ$}}e_{ii_2}\underbrace{M_1^\circ\cdots M_1^\circ}_{\text{alternating in $M_1^\circ, N_2^\circ$}}e_{i_2 i_3}\quad \cdots\quad e_{i_{n-1}i}\underbrace{M_1^\circ\cdots M_1^\circ}_{\text{alternating in $M_1^\circ, N_2^\circ$}}e_{11}               
$$
which can be written as a linear combination of alternating words in $M_1^\circ, M^\circ_2$ since $i \neq i_2 \neq i_3 \neq \cdots \neq i_{n-1} \neq i$. (Remark here that $N_2+\mathrm{span}\big(\Lambda^\circ(M_1^\circ,N_2^\circ)\setminus N_2^\circ\big)$ forms a dense $*$-subalgebra of $N$ in any von Neumann algebra topology.) In this way any `restricted' traveling word can be approximated, due to Kaplansky's density theorem, by a bounded net consisting of linear combinations of alternating words in $M_1^\circ, M_2^\circ$. Hence we are done. 
\end{proof} 

It follows from the above claim that $q_1 N q_1$ and $q_1 Q q_1 \vee \{w_i\,|\,2\leq i \leq n\}''$ are free in $(q_1 M q_1,(1/\varphi(q_1))\varphi|_{q_1 M q_1})$. Note that $(q_1 N q_1)_{\varphi|_{q_1 N q_1}} = q_1 (N_c)_{\varphi_{N_c}} q_1$ is clearly diffuse, and also $q_1 Q q_1$ is non-trivial and sits in $(q_1 M q_1)_{\varphi|_{q_1 M q_1}}$. Therefore (the latter assertion of) Theorem \ref{T-3.7} shows $(q_1 M_\varphi q_1)'\cap(q_1 M^\omega q_1) = ((q_1 M q_1)_{\varphi|_{q_1 M q_1}})' \cap (q_1 M q_1)^\omega = \mathbb{C}q_1$. Since $c_{P_{\varphi|_P}}(q_1) = e_{11}-r$ ({\it n.b.}~$e_{11}M_\varphi e_{11}$ contains $P_{\varphi|_P}$) and since $r = \sum_{i=1}^n e_{1i}(p\wedge e_{ii})e_{i1}$ is minimal and central in $e_{11}Me_{11}$ ({\it n.b.}~$e_{11}Me_{11}$ is generated by $P$ and the $e_{1i}y_{i1}$'s), one has 
\begin{align}
&e_{11}M e_{11} = 
\overset{r}{\mathbb{C}}\oplus(e_{11}-r)M(e_{11}-r), \label{Eq-4.8}\\ 
&((e_{11}-r)M_\varphi(e_{11}-r))'\cap((e_{11}-r)M^\omega(e_{11}-r)) = \mathbb{C}(e_{11}-r). \label{Eq-4.9}
\end{align}
Then, by \eqref{Eq-4.8} we get
\begin{align*} 
1_{B(\mathcal{K})}M1_{B(\mathcal{K})} = 
\mathrm{span}\{e_{i1}re_{1j}\,|\,1\leq i,j\leq n\}\oplus\big(\sum_{i=1}^n e_{i1}(e_{11}-r)e_{1i}\big)M\big(\sum_{i=1}^n e_{i1}(e_{11}-r)e_{1i}\big)\end{align*}
since $\{e_{ij}\}_{i,j}$ is a unital matrix unit system inside the left-hand side. Therefore this description of $1_{B(\mathcal{K})}M1_{B(\mathcal{K})}$, the relative commutant property \eqref{Eq-4.9} and $e_{11}-r \geq q_1 = e'_{11} \in (N_c)_{\varphi|_{N_c}}$ altogether show $c_M(e_{11}-r) = \big(\sum_{i=1}^n e_{i1}(e_{11}-r)e_{1i}\big)\vee 1_{N_c} = \big(\sum_{i=1}^n (e_{ii}-e_{i1}re_{1i})\big)\vee 1_{N_c} = \big(\sum_{i=1}^n (p\wedge e_{ii} - e_{i1}re_{1i})\big)+1_{N_c}$. Note that $(e_{11}-r)M_\varphi(e_{11}-r)$ is conjugate, via $\mathrm{Ad}e_{i1}$, to $(e_{ii}-e_{i1}re_{1i})M_\varphi(e_{ii}-e_{i1}re_{1i})$, and thus every $(e_{ii}-e_{i1}re_{1i})M_\varphi(e_{ii}-e_{i1}re_{1i})$ is a factor, where we remark that $e_{ii}-e_{i1}re_{1i}$ falls in $M_\varphi$. Hence we get $c_{M_\varphi}(e_{ii}-e_{i1}re_{1i}) = c_{M_\varphi}(e'_{ii}) = c_{M_\varphi}(e'_{11}) = c_{M_\varphi}(e_{11}-r)$ for every $i$, since all $e'_{ii}$'s are Murray--von Neumann equivalent in $N_\varphi$ ($\subseteq M_\varphi$). Consequently we have $\big(\sum_{i=1}^n (p\wedge e_{ii} - e_{i1}re_{1i})\big)+1_{N_c} = c_M(e_{11}-r) \geq c_{M_\varphi}(e_{11}-r) = c_{M_\varphi}(e_{ii}-e_{i1}re_{1i}) \geq (e_{ii}-e_{i1}re_{1i})\vee 1_{N_c}$ for every $i$ so that $c_{M_\varphi}(e_{11}-r) = \big(\sum_{i=1}^n (p\wedge e_{ii} - e_{i1}re_{1i})\big)+1_{N_c}$. Therefore we conclude $M_d = N_d(1_{N_d}-\sum_{i=1}^n p\wedge e_{ii})\oplus\mathrm{span}\big\{ e_{i1}re_{1j}\,|\, 1\leq i, j \leq n\}$ and $((M_c)_{\varphi|_{M_c}})'\cap M_c^\omega = \mathbb{C}$. The description of $\mathrm{span}\big\{ e_{i1}re_{1j}\,|\, 1\leq i, j \leq n\} \cong B(\mathcal{K})$ in $M_d$ agrees with \eqref{Eq-4.3}. We also have 
\begin{align*} 
\varphi(e_{i1}re_{1j}) 
= \delta_{ij}\frac{\varphi_2(e_{ii})}{\varphi_2(e_{11})}\varphi(r) 
= \delta_{ij}\varphi_2(e_{ii})\Big(1-(1-\varphi_1(p))\sum_{k=1}^n\frac{1}{\varphi_2(e_{kk})}\Big)
\end{align*} 
(if $r\neq0$), which agrees with \eqref{Eq-4.4}. Therefore we complete the discussion of (2-ii). In this case $B(\mathcal{K})$ has contribution to the multi-matrix part $M_d$ as long as $r \neq 0$ or equivalently $\sum_{i=1}^n \frac{1}{\varphi_2(e_{ii})} \lneqq \frac{1}{1-\varphi_1(p)}$, which agrees with the condition \eqref{Eq-4.1}. 
  
\medskip
{\bf Step 3 -- general case.} It remains to be dealt with the case that either $M_1=M_{1,d}$ or $M_2 = M_{2,d}$ has the infinite dimensional center, that is, either $J_1$ or $J_2$ is an infinite set. Firstly suppose that $J_1$ is finite but $J_2$ is infinite. Then, set:  
$$
N_2 := \Big[\sideset{}{^\oplus}\sum_{j \in J'_2} B(\mathcal{H}_{2j})\Big]\oplus\mathbb{C}1_{J'_2{}^c}
$$
for a finite subset $J'_2 \Subset J_2$, where $1_{J'_2{}^c}$ denotes the unit of $N_{2,J'_2{}^c} := \sideset{}{^\oplus_{j \in J'_2{}^c}}\sum B(\mathcal{H}_{2j})$. Note that $N := M_1\vee N_2$ is a free product von Neumann algebra that is treated in Step 2 so that $N = N_d\oplus N_c$, where $N_d$ is a multi-matrix algebra whose structure is determined by the algorithm in Theorem \ref{T-4.1} and $((N_c)_{\varphi|_{N_c}})'\cap N_c^\omega = \mathbb{C}$. One can choose $J'_2$ such that $\varphi_2(1_{J'_2{}^c})$ is so small, and hence $1_{J'_2{}^c} \in N_c$. By Lemma \ref{L-2.2} we see that $1_{J'_2{}^c} M 1_{J'_2{}^c}$ is the free product von Neumann algebra of $N_{2,J'_2{}^c}$ with $(1/\varphi_2(1_{J'_2{}^c}))\,\varphi_2|_{N_{2,J'_2{}^c}}$ and $1_{J'_2{}^c} N 1_{J'_2{}^c} = 1_{J'_2{}^c} N_c 1_{J'_2{}^c}$ with $(1/\varphi_2(1_{J'_2{}^c}))\varphi|_{1_{J'_2{}^c} N 1_{J'_2{}^c}}$, and moreover $c_M(1_{J'_2{}^c}) = c_N(1_{J'_2{}^c}) = 1_{N_c}$. Thus Theorem \ref{T-3.7} shows $((1_{J'_2{}^c} N 1_{J'_2{}^c})_{\varphi|_{1_{J'_2{}^c} N 1_{J'_2{}^c}}})'\cap(1_{J'_2{}^c} M 1_{J'_2{}^c})^\omega = \mathbb{C}$. Since $1_{J'_2{}^c} \in (N_c)_{\varphi|_{N_c}}$ and $((N_c)_{\varphi|_{N_c}})'\cap N_c^\omega = \mathbb{C}$, one has $c_{N_\varphi}(1_{J'_2{}^c}) = 1_{N_c}$ so that $1_{N_c} \geq c_M(1_{J'_2{}^c}) \geq c_{M_\varphi}(1_{J'_2{}^c}) \geq c_{N_\varphi}(1_{J'_2{}^c}) = 1_{N_c}$ implying $c_{M_\varphi}(1_{J'_2{}^c})=1_{N_c}$.  Consequently, $M = M_d\oplus M_c$, $M_d = N_d$ and moreover $((M_c)_{\varphi|_{M_c}})'\cap M_c^\omega = \mathbb{C}$. 

When both $J_1$ and $J_2$ are infinite, the same argument reduces this case to the previous one, i.e., one of $J_i$'s is infinite and the other finite. Hence we are done.      

\medskip
We complete the proof of the case (d). Hence the proof of Theorem \ref{T-4.1} is now finished.        

\section{Concluding Remarks} 

\subsection{On $((M_c)_{\varphi|_{M_c}})'\cap M_c = \mathbb{C}$} Let us further assume that $M_1$ and $M_2$ have separable preduals. It is known, see e.g.~\cite[Proposition 4.2]{Dykema:FieldsInstituteComm97}, that the free product state is almost periodic if so are given faithful normal states. We also show in Theorem \ref{T-4.1} that the diffuse factor part $M_c$ is always a full factor. Therefore it is important, in view of Sd-invariant of Connes \cite{Connes:JFA74}, to see when $((M_c)_{\varphi|_{M_c}})'\cap M_c = \mathbb{C}$ holds. If this was true, then the Sd-invariant would coincide with the point spectra of the modular operator $\Delta_\varphi$, which is computed as the (multiplicative) group algebraically generated by the point spectra of the modular operators $\Delta_{\varphi_i}$'s. This is indeed the case; namely we can confirm that $((M_c)_{\varphi|_{M_c}})'\cap M_c^\omega = \mathbb{C}$ holds when both $\varphi_1$ and $\varphi_2$ are almost periodic, though the general situation is much complicated (indeed it does not hold in general !). On the other hand we can show that the free product state is `special' in some sense. 
Those will be given in a separate paper \cite{Ueda:progress}. 

\subsection{$N_\omega = \mathbb{C} \Longrightarrow N'\cap N^\omega = \mathbb{C}$ ?} Our main theorem (Theorem \ref{T-4.1}) says that the diffuse factor part $M_c$ satisfies $(M_c)_\omega = \mathbb{C}$, and furthermore $(M_c)'\cap M_c^\omega = \mathbb{C}$. Thus the next question seems interesting. {\it Does there exist an example of properly infinite factor $N$ with $N_\omega=\mathbb{C}$ but $N'\cap N^\omega \neq \mathbb{C}$ ?}  

\subsection{Lack of Cartan subalgebras} As remarked after Corollary \ref{C-4.3} the diffuse factor part $M_c$ has no Cartan subalgebra when $M$ (or $M_c$) has the weak$^*$ completely bounded approximation property. It is quite interesting whether or not the same phenomenon occurs in general. Remark here that the phenomenon holds for any `tracial' free product of $R^\omega$-embeddable von Neumann algebras due to free entropy technologies \cite{Voiculescu:GAFA96},\cite{Jung:TAMS03},\cite{Shlyakhtenko:TAMS05}. However the question is non-trivial even for `tracial' free products without assuming the $R^\omega$-embeddability. 

\subsection{Questions related to free Araki--Woods factors} It should be a next important question whether or not the diffuse factor part $M_c$ is isomorphic to a free Araki--Woods factor introduced by Shlyakhtenko \cite{Shlyakhtenko:PacificJMath97} when given $M_1$ and $M_2$ are hyperfinite.  The reason is that free Araki--Woods factors are expected to be natural models of `free product type' von Neumann algebras. In the direction Houdayer \cite{Houdayer:IMRN07} could successfully identify some free product factors of hyperfinite von Neumann algebras (including all free products of two copies of $M_2(\mathbb{C})$) with some of free Araki--Woods factors in state--preserving way. 
         
\section*{Acknowledgment} We thank the referee for his or her careful reading and comments.

\end{document}